 \newtheorem{thm}{Theorem}
 \newtheorem{cor}{Corollary}
 \newtheorem{prop}{Proposition}
 \newtheorem{lem}{Lemma}
\theoremstyle{definition}
 \newtheorem{rem}{Remark}}
\theoremstyle{definition}
  \newtheorem{defn}{Definition}}
\theoremstyle{definition}
 \newtheorem{exam}{Example}}
 \newcommand{\s}{\mathcal{S}}
 \newcommand{\mC}{\mathcal{C}}
\newcommand{\bn}{\mathbf{n}}
\title{Finite orbits of Hurwitz actions on braid systems}
\author{Tetsuya Ito}
\date{}
\begin{document}
  
 \maketitle
\begin{abstract} 
 There are natural actions of the braid group $B_{n}$ on $B_{m}^{n}$, the $n$-fold product of the braid group $B_{m}$, called the Hurwitz action. We first study the roots of centralizers in the braid groups. By using the structure of the roots, we provide a criterion for the Hurwitz orbit to be finite and give an upper bound of the size for a finite orbit in $n=2$ or $m=3$ case. 
\end{abstract}

\section{Introduction}

\footnote[0]{2000 Mathematics Subject Classification: Primary 20F36}
Let $S_{n}$ be the degree $n$ symmetric group and $B_{n}$ be the braid group of $n$-strands, defined by the presentation  
\[
B_{n} = 
\left\langle
\sigma_{1},\sigma_{2},\cdots ,\sigma_{n-1}
\left|
\begin{array}{ll}
\sigma_{i}\sigma_{j}=\sigma_{j}\sigma_{i} & |i-j|\geq 2 \\
\sigma_{i}\sigma_{j}\sigma_{i}=\sigma_{j}\sigma_{i}\sigma_{j} & |i-j|=1 \\
\end{array}
\right.
\right\rangle
.
\]

The pure braid group $P_{n}$ is defined as the kernel of the natural projection $\pi:B_{n} \rightarrow S_{n}$, defined by $\sigma_{i} \mapsto (i,i+1)$. 
For a braid $\beta = \sigma_{i_{1}}^{e_{1}}\sigma_{i_{2}}^{e_{2}}\cdots \in B_{n}$, the exponent sum of $\beta$ is defined by the integer $e_{1}+e_{2}+\cdots$ and denoted by $e(\beta)$.   
 
  {\it A braid system} of {\it degree} $m$ and {\it length} $n$ is, by definition, an element of the $n$-fold product of the braid group $B_{m}$. The {\it Hurwitz action} is an action of $B_{n}$ on the set of length $n$, degree $m$ braid systems $B_{m}^{n}$, defined by
\[(\beta_{1},\beta_{2},\ldots,\beta_{n})\cdot \sigma_{i} = (\beta_{1},\beta_{2},\ldots,\beta_{i-1},\beta_{i+1},\beta_{i}^{\beta_{i+1}},\beta_{i+2},\ldots , \beta_{n})\]
where we denote $\beta_{i+1}^{-1}\beta_{i}\beta_{i+1}$ by $\beta_{i}^{\beta_{i+1}}$.

Diagrammatically, the definition of the Hurwitz action can be understood by the Figure \ref{fig:hurwitzaction}. More generally, we can define the action of the braid group $B_{n}$ on the $n$-fold product of groups or racks in a similar way \cite{br}. 

\begin{figure}[htbp]
 \begin{center}
\includegraphics[width=45mm]{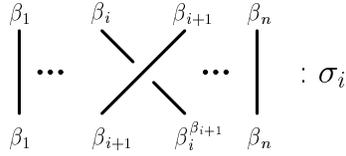}
 \end{center}
 \caption{Diagrammatic description of the Hurwitz action}
 \label{fig:hurwitzaction}
\end{figure}

For a braid system $\s$, we denote the orbit of $\s$ under the Hurwitz action by $\s\cdot B_{n}$ and call it the {\it Hurwitz orbit}. The main object studied in this paper is finite Hurwitz orbit. Although the definition of the Hurwitz action is simple, a computation of a Hurwitz orbit is not so easy. Some interesting calculations for Hurwitz orbits for Artin groups are done in \cite{h}. We study the structure of a finite Hurwitz orbit for general braid systems, and provide an upper bound of finite Hurwitz orbit for length $2$ or degree $3$ braid systems.

To study finite Hurwitz orbit, we first study the roots of centralizers of braids. We denote by $Z(\beta)$ the centralizer of an $n$-braid $\beta$. The following results use the structure theorem of centralizers in \cite{gw}, which is based on the classification of surface automorphisms due to Thurston \cite{flp}.
\begin{thm}
\label{thm:centerroot}
Let $\alpha,\beta \in B_{m}$ and suppose $\alpha \in Z(\beta^{s})$ for some $s>0$.
\begin{enumerate} 
\item If $\beta$ is periodic, then $\alpha \in Z(\beta^{m\cdot(m-1)})$.
\item If $\beta$ is pseudo-Anosov, then $\alpha \in Z(\beta)$.
\item If $\beta$ is reducible, then $\alpha \in Z(\beta^{(m-1)!})$.
\end{enumerate}
\end{thm}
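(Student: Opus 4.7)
The plan is to handle the three Nielsen--Thurston types of $\beta$ separately, using the structure theorem for centralizers in $B_m$ from \cite{gw}.

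The pseudo-Anosov and periodic cases should be essentially immediate. For pseudo-Anosov $\beta$, by \cite{gw} the centralizer $Z(\beta)$ is free abelian of rank $2$, generated by $\Delta^2$ and a primitive root of $\beta$; since $\beta$ and $\beta^s$ share the same invariant pair of measured foliations, any mapping class commuting with $\beta^s$ already preserves these foliations and hence commutes with $\beta$, giving $Z(\beta^s) = Z(\beta)$. For periodic $\beta$, I would invoke the classical fact that every periodic $m$-braid is conjugate to a power of $\delta = \sigma_{1}\sigma_{2}\cdots \sigma_{m-1}$ or of $\epsilon = \delta\sigma_{1}$, together with the identities $\delta^{m} = \epsilon^{m-1} = \Delta^{2}$. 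It follows that $\beta^{m(m-1)}$ is a power of the central element $\Delta^{2}$, so $Z(\beta^{m(m-1)}) = B_m$ and the statement is trivial.

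The reducible case is where the real work lies. Let $\mathcal{R} = \mathcal{R}(\beta)$ be the canonical reduction system of $\beta$; because the CRS is determined by any nonzero power and is preserved by the whole centralizer, $\mathcal{R}(\beta^{s}) = \mathcal{R}$ and $\alpha$ preserves $\mathcal{R}$ setwise. Cutting the $m$-punctured disk along $\mathcal{R}$ produces at most $m-1$ subsurfaces, since any multicurve on an $m$-punctured disk has at most $m-2$ components, and $\beta$ permutes these subsurfaces with order dividing $(m-1)!$. Hence $\beta^{(m-1)!}$ fixes each component $\Sigma_{i}$ and each curve of $\mathcal{R}$, restricting to each $\Sigma_{i}$ as a periodic or pseudo-Anosov mapping class on a surface with $m_i < m$ punctures. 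Applying the previous two cases component by component, and using the numerical fact that $m_i(m_i-1) \mid m_i! \mid (m-1)!$ for every $m_i \leq m-1$ so that the inner periodic bound is absorbed in $(m-1)!$, one concludes that $\alpha$ commutes with $\beta^{(m-1)!}$.

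The main obstacle will be the case in which $\alpha$ nontrivially permutes the components of $D_m \setminus \mathcal{R}$: one then has to compare the restrictions of $\alpha$ across the $\alpha$-orbits of components, using the explicit description from \cite{gw} of how an element of $Z(\beta^{s})$ decomposes with respect to the CRS, and match this decomposition against the factorization of $\beta^{(m-1)!}$ into its restrictions to the fixed components. Once this bookkeeping is set up, the component-wise application of the periodic and pseudo-Anosov cases finishes the argument.
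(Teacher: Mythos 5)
Your periodic and pseudo-Anosov cases are essentially the paper's arguments (one small caveat: ``preserves the foliations, hence commutes with $\beta$'' is not literally immediate; the paper completes it by noting that $\alpha\beta\alpha^{-1}\beta^{-1}$ preserves both foliations and has dilatation $1$, hence is periodic, and has exponent sum zero, hence is trivial). The genuine gap is in the reducible case: the step you defer as ``bookkeeping'' for $\alpha$ permuting components is the actual mathematical content of the theorem, and your outline supplies no tool to carry it out. Concretely, write $\beta$ in normal form $\beta_{ext}\cdot(\beta_{[1]},\ldots,\beta_{[k]})_{\bn}$, so that the interior braids of $\beta^{(m-1)!}$ are the powers $\beta_{[i]}^{a_{i}}$ with $a_{i}=(m-1)!/r_{i}$. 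From $\alpha\in Z(\beta^{(m-1)!\cdot s})$ you only learn that $\alpha$ matches the $s$-th powers: if $\alpha_{ext}$ carries a tube of the $i$-th orbit to a tube of the $j$-th, then $\beta_{[i]}^{a_{i}s}$ and $\beta_{[j]}^{a_{j}s}$ agree up to the conjugation induced by $\alpha$. To conclude $\alpha\in Z(\beta^{(m-1)!})$ you must descend from the $s$-th powers to the braids $\beta_{[i]}^{a_{i}}$, $\beta_{[j]}^{a_{j}}$ themselves, i.e.\ you need a cancellation-of-roots statement. The paper gets exactly this from the theorem of Gonz\'alez-Meneses that roots of braids are unique up to conjugacy \cite{g}: conjugacy of the $s$-th powers forces $\beta_{[i]}^{a_{i}}$ and $\beta_{[j]}^{a_{j}}$ to be conjugate, and the normal-form convention (conjugate interior braids are chosen to be literally equal) upgrades this to equality, which is precisely the statement that $\alpha_{ext}$ is consistent with $\beta_{ext}^{(m-1)!}$, i.e.\ $\alpha_{ext}\in Z_{0}(\beta_{ext}^{(m-1)!})$; the split exact sequence of \cite{gw} then yields $\alpha\in Z(\beta^{(m-1)!})$. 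Without this input, your componentwise argument only produces $\alpha\in Z(\beta^{N})$ for some possibly larger $N$, not the consistency needed for the exponent $(m-1)!$.

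A secondary quantitative slip: your divisibility $m_{i}(m_{i}-1)\mid m_{i}!\mid(m-1)!$ ignores the orbit lengths. The restriction of $\beta^{(m-1)!}$ to a component lying in a $\beta$-orbit of length $r_{i}$ is the $r_{i}$-fold return map raised to the power $(m-1)!/r_{i}$, so the divisibility you actually need is $r_{i}\,c_{i}(c_{i}-1)\mid(m-1)!$ (in the paper's notation), which is the condition the paper invokes; your version would give the wrong exponent whenever $r_{i}>1$. Relatedly, the cutting picture leaves implicit that the same periodic/pseudo-Anosov analysis must also be applied to the exterior braid $\beta_{ext}$, a braid on $\sum_{i} r_{i}\leq m-1$ strands, to control the exterior part $\alpha_{ext}$ of $\alpha$; the paper handles this by running the argument once more at the level of $\beta_{ext}^{(m-1)!}$.
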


This result is interesting in its own right. This theorem implies, for two $n$-braids $\alpha$ and $\beta$, if $\alpha^{M}$ and $\beta^{M}$ commute for some non-zero integer $M$, then $\alpha^{n!}$ and $\beta^{n!}$ always commute.

Now we return to consider finite Hurwitz orbit. To state our results, we introduce a notion of a reducible braid system. We say a length $n$ braid system $\s=(\beta_{1},\beta_{2},\ldots,\beta_{n})$ is {\it reducible} if there exists a non-trivial partition $I \coprod J$ of the set $\{1,2,\ldots,n\}$ such that  
 $\beta_{i} \beta_{j} = \beta_{j}\beta_{i}$ for all $i\in I, j\in J$. 
  For a reducible braid system $\s$, let us define $\s' = (\beta_{i_{1}},\beta_{i_{2}},\ldots, \beta_{i_{l}})$, where $i_{p} \in I, i_{p}<i_{p+1}$ and $\s'' = (\beta_{j_{1}},\beta_{j_{2}},\ldots,\beta_{j_{m}})$, where $j_{p} \in J, j_{p} < j_{p+1}$.
  
As is easily checked, if a reducible braid system $\s$ has finite Hurwitz orbit, then Hurwitz orbits of $\s'$ and $\s''$ are also finite, and the inequality 
\[ 
\sharp(\s \cdot B_{n}) \leq 
\left( 
\begin{array}{c} 
n \\
l  
\end{array} 
\right) 
\sharp(\s' \cdot B_{l}) \cdot \sharp(\s'' \cdot B_{n-l})
\]
 holds. So in this paper we mainly focus on irreducible braid systems.
Our main results are the following.

\begin{thm}[Finiteness theorem for length $2$ braid systems]
\label{thm:b2finiteness}
 Let $\s$ be a degree $m$, length two braid system having finite Hurwitz orbit.
\begin{enumerate}
\item  If $m=3$, then $\sharp (\s \cdot B_{2}) \leq 6$.
\item  If $m \geq 4$, then $\sharp(\s \cdot B_{2}) \leq 2\cdot (m-1)!$.
\end{enumerate}
\end{thm}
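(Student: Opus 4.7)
The plan is to reduce the Hurwitz dynamics on $\s=(\beta_1,\beta_2)$ to the conjugation action of a single element $\Delta:=\beta_1\beta_2$, and then to apply Theorem~\ref{thm:centerroot} in each Nielsen--Thurston class of $\Delta$. A short direct calculation shows that $\sigma_1$ preserves the product $\beta_1\beta_2$ and that
\[
\s\cdot\sigma_1^{2}=(\Delta^{-1}\beta_1\Delta,\ \Delta^{-1}\beta_2\Delta).
\]
Iterating, $\s\cdot\sigma_1^{2k}$ is the simultaneous $\Delta^k$-conjugate of $\s$, and $\s\cdot\sigma_1^{2k+1}$ is the simultaneous $\Delta^k$-conjugate of $\s\cdot\sigma_1$. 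Consequently $\s\cdot B_2$ is finite if and only if some positive power $\Delta^N$ centralizes $\beta_1$ (equivalently $\beta_2$, since $\beta_2=\beta_1^{-1}\Delta$), and for the least such $N$ one obtains the universal estimate $\sharp(\s\cdot B_2)\le 2N$.

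It remains to bound $N$ in terms of $m$. By Theorem~\ref{thm:centerroot} applied to $\Delta\in B_m$, the hypothesis $\beta_1\in Z(\Delta^N)$ upgrades to $\beta_1\in Z(\Delta)$ if $\Delta$ is pseudo-Anosov (so $N=1$), to $\beta_1\in Z(\Delta^{(m-1)!})$ if $\Delta$ is reducible (so $N\le(m-1)!$), and to $\beta_1\in Z(\Delta^{m(m-1)})$ if $\Delta$ is periodic. The last is too weak, and I would sharpen it using the explicit description of periodic braids: every periodic $m$-braid is, up to conjugacy, a power of $\delta_m=\sigma_1\sigma_2\cdots\sigma_{m-1}$ or of $\epsilon_m=\delta_m\sigma_1$, with $\delta_m^m$ and $\epsilon_m^{m-1}$ both central in $B_m$. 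It follows that for periodic $\Delta$ some $\Delta^k$ with $k\le m$ is already central, whence $Z(\Delta^k)=B_m$ and $N\le m$ in this case.

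Combining the three cases gives $\sharp(\s\cdot B_2)\le 2\max(m,(m-1)!,1)$. For $m=3$ the maximum is $3$, which yields the bound $6$; for $m\ge 4$ one has $(m-1)!\ge m$, so the reducible bound dominates and yields $2(m-1)!$, matching the two parts of the statement. The main obstacle is the sharpening of the periodic case: Theorem~\ref{thm:centerroot}(1) alone produces the estimate $2m(m-1)$, which is already too weak for $m=3$ (giving $12$ rather than $6$) and for $m=4$ (giving $24$ rather than $12$), so the Nielsen--Thurston normal form for periodic braids as roots of the full twist is essential. A minor ancillary point is that the orbit can a priori be strictly shorter than $2N$ when the even-indexed and odd-indexed iterates coincide (for example when $\beta_1=\beta_2$), but such degeneracies only improve the bound.
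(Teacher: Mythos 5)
Your proposal is correct and takes essentially the same route as the paper: the computation $\s\cdot\sigma_{1}^{2p}=(\beta_{1}^{(\beta_{1}\beta_{2})^{p}},\beta_{2}^{(\beta_{1}\beta_{2})^{p}})$ (Lemma \ref{lem:calc} \emph{1.}) reduces finiteness of the orbit to $\beta_{1},\beta_{2}\in Z((\beta_{1}\beta_{2})^{p})$ for some $p>0$, and Theorem \ref{thm:centerroot} then bounds the minimal such $p$, giving orbit size at most $2\max\{(m-1)!,m\}$. You are right that the periodic case requires the sharpening $p\le m$ (from the fact, recalled in Section 2, that the $m$-th or $(m-1)$-st power of a periodic $m$-braid is central) rather than the bound $m(m-1)$ stated in Theorem \ref{thm:centerroot} \emph{1.}; the paper uses exactly this tacitly when it asserts $p\le\max\{(m-1)!,m\}$, so your explicit treatment fills in a step the paper glosses over.
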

\begin{thm}[Finiteness theorem for degree $3$ braid systems]
\label{thm:3finite}
Let $\s$ be a degree $3$, length $n$ braid system having finite Hurwitz orbit.
\begin{enumerate}
\item If $n=2$, then $\sharp \s \cdot B_{n} \leq 6$.
\item If $n\geq 3$, then $\sharp \s \cdot B_{n} \leq 27 \cdot n!$.
\item If $n\geq 5$, then $\s$ is reducible.
\end{enumerate}
\end{thm}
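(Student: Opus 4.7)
The plan is to dispatch part (1) immediately, tackle the structural statement (3) next, and then derive the cardinality bound in (2) from it.

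Part (1) is Theorem \ref{thm:b2finiteness}(1) with $m=3$; nothing new is required.

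For part (3), the strategy is to extract from finiteness of the orbit a strong commutation hypothesis on the $\beta_i$ and exploit the structure of $B_3$. Since $\s\cdot B_n$ is finite, the stabilizer of $\s$ has finite index, so there is $k>0$ with $\sigma_i^{2k}$ stabilizing $\s$ for every $i$. A direct Hurwitz computation gives
\[
\sigma_i^{2k}\cdot(\beta_i,\beta_{i+1}) = \bigl((\beta_i\beta_{i+1})^{-k}\beta_i(\beta_i\beta_{i+1})^k,\;(\beta_i\beta_{i+1})^{-k}\beta_{i+1}(\beta_i\beta_{i+1})^k\bigr),
\]
so $(\beta_i\beta_{i+1})^k$ centralizes both $\beta_i$ and $\beta_{i+1}$; Theorem \ref{thm:centerroot} with $m=3$ upgrades this to $(\beta_i\beta_{i+1})^{6}$ centralizing both factors. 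Bringing non-adjacent pairs into adjacency via the Hurwitz action yields the same kind of statement for every pair $(\beta_i,\beta_j)$.

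I would then descend to $B_3/Z(B_3)\cong PSL(2,\mathbb{Z})\cong \mathbb{Z}/2\ast \mathbb{Z}/3$. In this quotient, centralizers of infinite-order elements are infinite cyclic, and torsion elements have order $2$ or $3$. The pairwise commutation-of-high-powers pattern therefore forces the images $\overline{\beta}_i$ to lie, up to finite index, in a common cyclic subgroup as soon as any of them is non-torsion; the remaining torsion images fall into boundedly many conjugates of $\mathbb{Z}/2$ and $\mathbb{Z}/3$. For $n\geq 5$, pigeonhole produces two indices whose images already commute in $PSL(2,\mathbb{Z})$, and because any commutator in $B_3$ has exponent sum $0$ while $Z(B_3)=\langle\Delta^2\rangle$ has $e(\Delta^2)=6$, such a commutation lifts to an honest $\beta_i\beta_j=\beta_j\beta_i$ in $B_3$. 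Iterating this argument and regrouping produces the non-trivial bipartition $I\sqcup J$ required for reducibility.

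For part (2), the base cases $n=3,4$ I would handle by direct counting: the multiset $\{[\beta_1],\ldots,[\beta_n]\}$ of conjugacy classes and the product $\beta_1\cdots\beta_n$ are Hurwitz-invariants, so the orbit is bounded by the number of factorizations of the fixed product through the prescribed conjugacy classes, and the cyclic-modulo-center structure of centralizers in $B_3$ turns this count into at most $27\cdot n!$. For $n\geq 5$, part (3) gives reducibility and the inequality
\[
\sharp(\s\cdot B_n)\leq \binom{n}{l}\sharp(\s'\cdot B_l)\cdot\sharp(\s''\cdot B_{n-l})
\]
recorded in the introduction drives an induction on $n$ preserving the $27\cdot n!$ bound from the base cases. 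The main obstacle is step (3): the case analysis by Thurston type (periodic, reducible, pseudo-Anosov) of each $\beta_i$ and the bookkeeping of $Z(B_3)$ is where the threshold $n=5$ should emerge from pigeonhole in $PSL(2,\mathbb{Z})$, and this is the most delicate part of the argument.
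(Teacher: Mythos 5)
Your part (1) is fine, and the opening derivation of part (3) is correct as far as it goes: finiteness does give $\beta_i,\beta_j\in Z((\beta_i\beta_j)^k)$ for every pair, and Theorem \ref{thm:centerroot} upgrades this to $Z((\beta_i\beta_j)^6)$. But everything after that breaks down, because this pairwise condition is vacuous in exactly the hard case: when $\beta_i\beta_j$ is periodic, $(\beta_i\beta_j)^6$ is central (e.g.\ $(\sigma_1\sigma_2)^6=\Delta^4$), so the condition says nothing, and in $B_3/\langle\Delta^2\rangle\cong PSL(2,\mathbb{Z})$ the image of $(\beta_i\beta_j)^6$ is trivial, so your cyclic-centralizer argument does not apply. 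The pigeonhole is also unjustified: $PSL(2,\mathbb{Z})$ contains infinitely many conjugates of $\mathbb{Z}/2$ and $\mathbb{Z}/3$, so the torsion images do not fall into ``boundedly many'' subgroups, and elements of conjugate torsion subgroups need not commute. Finally, producing some commuting pairs does not yield reducibility: reducibility requires the non-commuting graph on $\{1,\ldots,n\}$ to be disconnected, and the system $(\sigma_1,\sigma_1,\sigma_1,\sigma_1,\sigma_2)$ satisfies every pairwise condition you derive, has many commuting pairs, yet is irreducible --- its orbit is infinite (the paper's Example 1(2)), but nothing in your pairwise scheme detects this, so ``iterating and regrouping'' cannot close the argument. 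Detecting it requires constraints involving three or more entries at once, which is what the paper actually uses: Lemma \ref{lem:restriction} applies finiteness to partial Coxeter elements $\beta_{i_1}\cdots\beta_{i_k}$ of all lengths, combined with the fact that a non-central periodic $3$-braid has exponent sum $\equiv\pm2,3\ (\mathrm{mod}\ 6)$; the threshold $n\geq 5$ then comes from pigeonhole on subset sums of residues in $\{\pm1,\pm2,3\}$ mod $6$, not from pigeonhole in $PSL(2,\mathbb{Z})$.

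Part (2) has two further gaps. First, counting ``factorizations of the fixed product through prescribed conjugacy classes'' cannot produce $27\cdot n!$: conjugacy classes in $B_3$ are infinite, and the set of factorizations of a fixed braid with factors in fixed conjugacy classes is in general infinite --- its finiteness is essentially what is being proved. The paper instead classifies the orbit graphs under the free group $F=\langle c_1,c_2\rangle\leq P_3$, using the dual Garside normal form (Lemmas \ref{lem:per}, \ref{lem:key}) and forbidden subgraphs, obtaining $\sharp(\s\cdot F)\leq 9$ and hence $162=27\cdot 3!$ for $n=3$; for $n=4$ it proves $\beta_1\equiv\beta_2\equiv\beta_3$ in $B_3'$ and builds a surjection from the orbit of the model system $(\sigma_1,\sigma_1,\sigma_1,\sigma_2)$, whose $P_4$-orbit has size $27$, giving $648=27\cdot 4!$. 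Second, your induction through the binomial inequality fails quantitatively: if both factors carry the $27\cdot(\,\cdot\,)!$ bound, then
\[
\binom{n}{l}\cdot 27\, l!\cdot 27\,(n-l)! \;=\; 27^2\cdot n!,
\]
which overshoots the claimed bound. The paper avoids this by a structural dichotomy special to $B_3$: if two entries $\beta_i,\beta_{i'}$ of a reducible system fail to commute, then $Z(\beta_i)\cap Z(\beta_{i'})=Z(B_3)$ forces every entry on the other side of the partition to be central; hence either all entries commute ($\sharp(\s\cdot B_n)\leq n!$), or there is a single irreducible block of length $k\leq 4$ with all remaining entries central, and the inequality is applied once with second factor equal to $1$, giving $\binom{n}{k}\cdot 27\,k!\cdot 1\leq 27\cdot n!$.
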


\section{Roots of centralizers}
\subsection{Structure of the centralizers of braids}
   In this subsection we briefly review the results of \cite{gw}, the structure of the centralizers of a braid. The braid group $B_{n}$ is naturally identified with the relative mapping class group $MCG(D_{n},\partial D_{n})$ of the $n$-punctured disc $D_{n}$, which is the group of isotopy classes of homeomorphisms of $D_{n}$ which fixes $\partial D_{n}$ pointwise \cite{bi}.
 
From the Nielsen-Thurston theory, each element of the braid group $B_{n}$ is classified into the following three types, {\it periodic, reducible}, and {\it pseudo-Anosov} according to its dynamical property. See \cite{flp} for details of Nielsen-Thurston theory. In this paper we treat the trivial element of $B_{n}$ as a periodic braid.

 A periodic braid is a braid some of whose powers belong to the center of the braid group, which is an infinite cyclic group generated by the square of the Garside element
\[ \Delta^{2}=\{(\sigma_{1}\sigma_{2}\cdots\sigma_{n-1})(\sigma_{1}\cdots\sigma_{n-2})\cdots(\sigma_{1}\sigma_{2})(\sigma_{1})\}^{2}.\]

 It is classically known \cite{e} that each periodic $n$-braid is conjugate to either \[ (\sigma_{1}\sigma_{2}\cdots\sigma_{n-1})^{m}\textrm{ or } (\sigma_{1}\sigma_{2}\cdots\sigma_{n-1}\sigma_{1})^{m}\]
 for some integer $m$. This implies that the $n$-th or $(n-1)$-st powers of a periodic braid always belong to the center of $B_{n}$.
 
  The centralizer of a periodic braid is simple in special case. From the above facts, we can write a periodic $n$-braid as 
\[\gamma^{-1}(\sigma_{1}\sigma_{2}\cdots\sigma_{n-1})^{k}\gamma \textrm{ or } \gamma^{-1}(\sigma_{1}\sigma_{2}\cdots\sigma_{n-1}\sigma_{1})^{k}\gamma. \]
 In the former case, if $k$ and $n$ are coprime, then the centralizer $Z(\beta)$ is an infinite cyclic group generated by $\gamma^{-1}(\sigma_{1}\sigma_{2}\cdots\sigma_{n-1})\gamma$.
Similarly, in the latter case, if $k$ and $n-1$ are coprime, then the centralizer $Z(\beta)$ is an infinite cyclic group generated by $\gamma^{-1}(\sigma_{1}\sigma_{2}\cdots\sigma_{n-1}\sigma_{1})\gamma$ \cite[Proposition 3.3]{gw}. If $k$ and $n$ (or $n-1$) are not coprime, then the centralizer of periodic braids are isomorphic to the braid group of annulus \cite[Corollary 3.6]{gw}.

   A pseudo-Anosov braid is a braid which is represented by a pseudo-Anosov homeomorphism. A pseudo-Anosov homeomorphism $f$ is a homeomorphism which has the two invariant measured foliations $(\mathcal{F}^{s},\mu^{s})$, $(\mathcal{F}^{u},\mu^{u})$ called the {\it stable} and {\it unstable foliation} and the real number $\lambda > 1$ called the {\it dilatation}. They satisfy the condition $f(\mathcal{F}^{s},\mu^{s})=(\mathcal{F}^{s},\lambda^{-1} \mu^{s})$ and $f(\mathcal{F}^{u},\mu^{u})=(\mathcal{F}^{u},\lambda\mu^{u})$. 

The centralizer $Z(\beta)$ of a pseudo-Anosov braid $\beta$ is also simple. The centralizer $Z(\beta)$ is isomorphic to the rank two free abelian group generated by one pseudo-Anosov element and one periodic element, both of which preserve the invariant foliations of $\beta$ \cite[Proposition 4.1]{gw}. In particular, all braids in $Z(\beta)$ are irreducible.

   A reducible braid is a braid which preserves a non-empty essential submanifold $\mC$ of $D_{n}$. In this paper we adapt the convention that every reducible braid is non-periodic. By taking an appropriate conjugation, each reducible braid $\beta$ can be converted to the following simple form, called a {\it standard form}. 

Regard $\mC$ as a set of essential circles. A collection of essential circles $\mC$ is called {\it standard curve system} if $\mC$ satisfies the following two conditions.
\begin{enumerate}
\item The center of each circle in $\mC$ lies on $x$-axis.  
\item For any two distinct circles $C$ and $C'$ in $\mC$, $C$ does not enclose $C'$.
\end{enumerate}

By taking an appropriate conjugation, we can always assume that a reducible braid $\beta$ preserves a standard curve system $\mC$. The braid $\beta$ acts on the set $\mC$ as a permutation of circles. Let us denote the orbit decomposition of $\mC$ by $\mC= \mC_{1} \cup \mC_{2}\cup \cdots \cup\mC_{l}$, where $\mC_{i} = \{ C_{i,1}, \ldots,C_{i,r_{i}} \}$. We choose the numbering $C_{i,j}$ so that $\beta(C_{i,j}) = C_{i,j+1}$ (modulo $r_{i}$) holds.

Let us denote the number of punctures in the circle $C_{i,j}$, which is independent of $j$, by $c_{i}$.
Then the orbit decomposition defines the weighted partition $\bn$ of an integer $n$, $\bn: n=c_{1}r_{1}+c_{2}r_{2}+\cdots + c_{k}r_{k}$.

In this situation, we can write the reducible braid $\beta$ as a composition of two parts. The first part is the {\it tubular braid}, which is a braiding of tubes corresponding to the permutation of the circles. Each tube contains some numbers of parallel strands (possibly one) which are not braided inside the tube. The other part is the {\it interior braids} $\beta_{i,j}$, which are braids inside the tube sending the circle $C_{i,j-1}$ to $C_{i,j}$. We denote the braid obtained by regarding each tube of the tubular braid as one strand by $\beta_{ext}$ and call it the {\it exterior braid}. The interior braids $\beta_{i,j}$ and the exterior braid $\beta_{ext}$ are chosen so that they are non-reducible.

Using the above notions, we denote the reducible braid $\beta$ as
\[ \beta= \beta_{ext}(\beta_{1,1}\oplus\beta_{1,2}\oplus\cdots \oplus \beta_{k,r_{k}})_{\bn}\]
and call such a form of the braid the {\it standard form}. See Figure \ref{fig:standard}.

\begin{figure}[htbp]
 \begin{center}
\includegraphics[width=80mm]{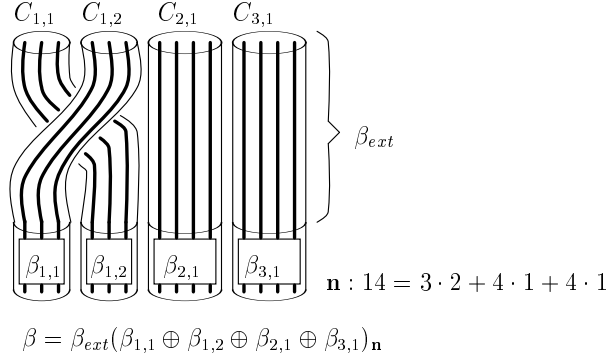}
 \end{center}
 \caption{Standard form of reducible braids}
 \label{fig:standard}
\end{figure}

 We can make a reducible braid in standard form much simpler by taking a further conjugation so that the following hold.
\begin{enumerate}
\item Each interior braid $\beta_{i,j}$ is a trivial braid unless $j=1$.
\item $\beta_{i,1}$ and $\beta_{j,1}$ are non-conjugate unless $\beta_{i,1} = \beta_{j,1}$. 
\end{enumerate}

After this modification, we denote the non-trivial interior braids $\beta_{[i,1]}$ simply by $\beta_{[i]}$.
Now the whole braid $\beta$ is written as
\[
\beta  =  \beta_{ext}\cdot(\beta_{[1]}\oplus \underbrace{ 1 \oplus \cdots \oplus 1}_{r_{1}-1} \oplus \beta_{[2]} \oplus \cdots \oplus \beta_{[k]} \oplus \underbrace{ 1 \oplus \cdots \oplus 1}_{r_{k}-1})_{\bn}. \]
We denote this special standard form of a reducible braid by
 \[ \beta= \beta_{ext}\cdot(\beta_{[1]},\beta_{[2]},\ldots,\beta_{[k]})_{\bn}. \]
and call it the {\it normal form}.

Let $\beta=\beta_{ext}\cdot(\beta_{[1]},\beta_{[2]},\ldots,\beta_{[m]})_{\bn}$ be a normal form of a reducible braid which preserves a standard curve system $\mC$. Then the centralizer of $\beta$ is described as follows.

Every $\alpha \in Z(\beta)$ preserves $\mC$, hence $\alpha$ is written as a standard form. In particular, the exterior part $\alpha_{ext}$ of $\alpha$ also induces the permutation of circles in $\mC$.  
We say $\alpha_{ext}$ is {\it consistent with } $\beta_{ext}$ if 
$\alpha_{ext} (C_{i,k}) = C_{j,l}$ then $\beta_{[i]}=\beta_{[j]}$ holds. Let $Z_{0}(\beta_{ext})$ be a subgroup of $Z(\beta_{ext})$ defined by 
\[ Z_{0}(\beta_{ext}) = \{ \alpha_{ext} \in Z(\beta_{ext}) \: | \: \alpha_{ext} \textrm{ is consistent with } \beta_{ext} \}. \]
 Then $Z(\beta)$ is described by the following split exact sequence \cite[Theorem 1.1]{gw}.
\[ 1 \longrightarrow Z(\beta_{[1]}) \times Z(\beta_{[2]})\times \cdots \times Z(\beta_{[k]}) \stackrel{i}{\longrightarrow} Z(\beta) \stackrel{j}{\longrightarrow} Z_{0}(\beta_{ext}) \longrightarrow 1.\]

The map $i$ is defined by
\[ i (\alpha_{[1]},\alpha_{[2]},\ldots ,\alpha_{[k]}) = 1\cdot(\underbrace{\alpha_{[1]}\oplus \cdots \oplus \alpha_{[1]}}_{r_{1}} \oplus \cdots \oplus \underbrace {\alpha_{[k]} \oplus \cdots \oplus \alpha_{[k]}}_{r_{k}}   )_{\bn}\] 
and the map $j$ is defined by 
\[ j( \alpha_{ext}\cdot (\alpha_{1,1}\oplus \cdots \oplus \alpha_{k,r_{k}} )_{\bn}) = \alpha_{ext}.\]
The splitting $s$ of the above exact sequence is given by 
\[s(\alpha_{ext}) = \alpha_{ext}(1\oplus \cdots \oplus 1)_{\bn} \]
Therefore, for each $\alpha \in Z(\beta)$, we can write $\alpha$ as 
\[ \alpha = \alpha_{ext} \cdot (\alpha_{[1]}^{\oplus r_{1}} \oplus \alpha_{[2]}^{\oplus r_{2}} \oplus \cdots \oplus  \alpha_{[k]}^{\oplus r_{k}}  )_{\bn}.\]

\subsection{Proof of Theorem \ref{thm:centerroot}}

Now we are ready to prove Theorem \ref{thm:centerroot}.

\begin{proof}
The assertion {\it 1.} is immediate because for a periodic braid $\beta \in B_{m}$, $\beta^{m}$ or $\beta^{m-1}$ belongs to $Z(B_{m})$.
The proof of the assertion {\it 2.} is also easy. Suppose $\beta$ is pseudo-Anosov and let $\mathcal{F}^{s},\mathcal{F}^{u}$ and $\lambda$ be the stable, unstable measured foliation and the dilatation of $\beta$.
 Since $\alpha$ belongs to the center of the pseudo-Anosov braid $\beta^{s}$, whose invariant measured foliations are also $\mathcal{F}^{s}$ and $\mathcal{F}^{u}$, $\alpha$ also preserves both $\mathcal{F}^{s}$ and $\mathcal{F}^{u}$. Now the braid $\alpha\beta\alpha^{-1}\beta^{-1}$ preserves the measured foliations $\mathcal{F}^{s}$ and $\mathcal{F}^{u}$ has the dilatation $1$. This implies that the braid $\alpha\beta\alpha^{-1}\beta^{-1}$ is periodic. Since the exponent sum of $\alpha\beta\alpha^{-1}\beta^{-1}$is zero, we conclude that $\alpha\beta\alpha^{-1}\beta^{-1} = 1$. Therefore we obtain $\alpha \in Z(\beta)$.

Now we proceed to the most difficult case, reducible case. 
By taking a conjugate of $\beta$, we may assume $\beta$ is a normal form  
\[ \beta=\beta_{ext}\cdot (\beta_{[1]},\beta_{[2]},\ldots,\beta_{[k]})_{\bn}\]
where $\bn\: : \: m=c_{1}\cdot r_{1} + \cdots + c_{k}\cdot r_{k} $ is an associated weighted partition of $m$. 
Let us define integers $a_{i}$ by $a_{i} = (m-1)! \slash r_{i}$.
Since the exterior part of $\beta^{(m-1)!}$ is a pure braid $\beta_{ext}^{(m-1)!}$, so $\beta^{(m-1)!}$ is written as a normal form  
\[ \beta^{(m-1)!} = \beta_{ext}^{(m-1)!} \cdot ( 
				\underbrace{\beta_{[1]}^{a_{1}},\ldots , \beta_{[1]}^{a_{1}} }_{r_{1}}, 
		\ldots, \underbrace{\beta_{[k]}^{a_{k}}, \ldots , \beta_{[k]}^{a_{k}} }_{r_{k}})_{\bn^{*}}. \]
where $\bn^{*}$ is a weighted partition defined by 
\[ \bn^{*} : m= \underbrace{c_{1}\cdot 1 +\cdots + c_{1} \cdot 1}_{r_{1}} + \cdots + \underbrace{ c_{k} \cdot 1 + \cdots + c_{k} \cdot 1}_{ r_{k}}.\]
		
Let $\alpha \in Z(\beta^{s})$. Then $\alpha \in Z(\beta^{(m-1)! \cdot s})$.
From the normal form of $\beta^{(m-1)! \cdot s}$,
 $\alpha$ can be written as a standard form
\[ \alpha = \alpha_{ext} \cdot (\alpha_{1,1}\oplus \alpha_{1,2}  \oplus \cdots \oplus \alpha_{k,r_{k}})_{\bn^{*}} \]

Since the interior braids $\alpha_{i,j}$ are irreducible $c_{i}$-braid and $\alpha_{i,j} \in Z(\beta_{[i]}^{a_{i}\cdot s})$, from the assertion {\it 1.} and {\it 2.} we obtain $\alpha_{i,j} \in \beta_{[i]}^{c_{i}\cdot(c_{i}-1)}$.

Now observe that $a_{i} \slash c_{i}\cdot(c_{i}-1) = (m-1)! \slash r_{i}c_{i}(c_{1}-1)$ is an integer.
Therefore we conclude that $\alpha_{i,j} \in Z(\beta_{[i]}^{a_{i}})$. By the same argument, we also obtain $\alpha_{ext} \in Z(\beta_{ext}^{(m-1)!})$. 

If $\alpha_{ext}$ is not consistent with $\beta^{(m-1)!}_{ext}$, then there exist pairs $(i,k)$ and $(j,l)$ such that $\alpha_{ext}(C_{[i,k]}) = C_{[j,l]}$ but $(\beta^{(m-1)!})_{i,k} = \beta_{[i]}^{a_{i}} \neq \beta_{[j]}^{a_{j}} = (\beta^{(m-1)!})_{j,l}$ holds.
On the other hand, $\alpha \in Z(\beta^{(m-1)! \cdot s})$ implies that $\alpha_{ext}$ is consistent with $\beta^{(m-1)!\cdot s}_{ext}$. Therefore 
$(\beta^{(m-1)!\cdot s})_{i,k} = \beta_{[i]}^{a_{i}\cdot s} = \beta_{[j]}^{a_{j} \cdot s} = (\beta^{(m-1)! \cdot s})_{j,l}$ holds. 

It is known that the root of a braid is unique up to conjugacy \cite{g}. Therefore the above equality means that $\beta_{[i]}^{a_{i}}$ and $\beta_{[j]}^{a_{j}}$ are conjugate. Since $\beta^{(m-1)!}$ is a normal form, we conclude that $\beta_{[i]}^{a_{i}} = \beta_{[j]}^{a_{j}}$, which is a contradiction.
Thus we conclude that $\alpha_{ext} \in Z_{0}(\beta_{ext}^{(m-1)!})$, so $\alpha \in Z(\beta^{(m-1)!})$.
\end{proof}

We remark that our value $(m-1)!$ for reducible braids case is not optimal. Only the properties of the number $(m-1)!$ we used in the proof is that the number $a_{i} \slash c_{i}\cdot(c_{i}-1) = (m-1)! \slash r_{i}c_{i}(c_{1}-1)$ is an integer and that $\beta_{ext}^{(m-1)!}$ is a pure braid. By considering these two properties more carefully, we can easily decrease our value $(m-1)!$. We give a smallest value for small $m$ for later use.

\begin{prop}
\label{prop:exactbound}
Let $\alpha,\beta \in B_{m}$ and suppose $\alpha \in Z(\beta^{s})$ for some $s > 0$ and $\beta$ is reducible.
\begin{enumerate} 
\item If $m=3$, then $\alpha \in Z(\beta)$ and $Z(\beta)$ is a free abelian group of rank two.
\item If $m=4$, then $\alpha \in Z(\beta^{s})$ for some $s\leq 3$.
\end{enumerate} 
\end{prop}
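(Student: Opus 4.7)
The plan is to rerun the argument of Theorem~\ref{thm:centerroot}.3 while tracking the two divisibility conditions isolated in the remark following that theorem: that $r_{i}c_{i}(c_{i}-1)$ divide $N$ for each subpartition index $i$ with $c_{i}\ge 2$, and that $\beta_{ext}^{N}$ be pure. Two small-$m$ observations will produce the sharper bounds. First, whenever $c_{i}\le 2$ the interior braid $\beta_{[i]}$ lies in the abelian group $B_{2}$ (or in the trivial $B_{1}$), so the hypothesis $\alpha_{[i]}\in Z(\beta_{[i]}^{s})$ already gives $\alpha_{[i]}\in Z(\beta_{[i]})$ and contributes no constraint on $N$. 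Second, the classical classification of periodic $3$-braids as conjugates of $(\sigma_{1}\sigma_{2})^{k}$ or $(\sigma_{1}\sigma_{2}\sigma_{1})^{k}$ implies that either the cube or the square of such a braid is central in $B_{3}$, replacing the generic bound $c_{i}(c_{i}-1)=6$ by $3$ or $2$.

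For part~(1) I would observe that the only weighted partition corresponding to a reducible $3$-braid is $\bn\colon 3=2+1$ with $(c_{1},r_{1})=(2,1)$ and $(c_{2},r_{2})=(1,1)$. Since the two tubes have different sizes, neither $\beta_{ext}\in B_{2}$ nor any candidate $\alpha_{ext}\in Z(\beta_{ext}^{s})$ can interchange them; both are forced to be pure $2$-braids and the consistency requirement is vacuous. Together with the first observation above, this yields $N=1$, hence $\alpha\in Z(\beta)$. For the rank statement the split exact sequence from \cite{gw} collapses to $1\to Z(\beta_{[1]})\to Z(\beta)\to Z_{0}(\beta_{ext})\to 1$ with $Z(\beta_{[1]})\cong B_{2}\cong\mathbb{Z}$ and $Z_{0}(\beta_{ext})\cong P_{2}\cong\mathbb{Z}$; since the lift $s(\alpha_{ext})$ is supported outside the essential tube and therefore commutes with every interior braid, the extension is a direct product and $Z(\beta)\cong\mathbb{Z}^{2}$.

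For part~(2) I would enumerate the five weighted partitions of $4$ supporting a non-trivial reduction: (i)~$4=2+2$, (ii)~$4=2\cdot 2$, (iii)~$4=3+1$, (iv)~$4=2+1+1$, and (v)~$4=2+1\cdot 2$. In (i) and (iv) all tubes are fixed by $\beta_{ext}$, the interior condition is absent ($c_{i}\le 2$), and in (iv) one additionally uses that an irreducible pure $3$-braid is either a central power of $\Delta^{2}$ or pseudo-Anosov, so $\alpha_{ext}\in Z(\beta_{ext})$; both cases give $N=1$. In (ii) and (v) the exterior permutes tubes in orbits of length $2$, so $\beta_{ext}^{2}$ is pure and $N=2$ suffices. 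The decisive case is (iii): the tubes have different sizes so $\beta_{ext}\in B_{2}$ is automatically pure, but $\beta_{[1]}\in B_{3}$ is a non-trivial irreducible $3$-braid. If $\beta_{[1]}$ is pseudo-Anosov, Theorem~\ref{thm:centerroot}.2 gives $N=1$; if $\beta_{[1]}$ is periodic, the observation above places $\beta_{[1]}^{3}$ or $\beta_{[1]}^{2}$ in the center of $B_{3}$, so $Z(\beta_{[1]}^{N})=B_{3}$ for the appropriate $N\in\{2,3\}$ and the interior condition holds automatically. Taking the maximum over all cases yields $N\le 3$.

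The main obstacle, as in the proof of Theorem~\ref{thm:centerroot}.3, is to certify that $\alpha_{ext}$ remains consistent with $\beta_{ext}^{N}$ after dropping from exponent $Ns$ down to $N$. This is the role played by uniqueness of roots of braids \cite{g}, exactly as in the original argument. Fortunately, in the case where the bound $N=3$ is actually needed (namely (iii)) the two tubes are distinguished by their sizes, so no non-trivial permutation of $\alpha_{ext}$ is available and the consistency issue becomes trivial.
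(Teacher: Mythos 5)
Your proposal is correct and takes essentially the same approach as the paper: the paper likewise reduces by conjugation to the standard/normal forms (for $m=3$ the single form $\sigma_{1}^{2p}(\sigma_{1}^{k}\oplus 1)_{(2,1)}$ with abelian interior and exterior, and for $m=4$ the forms with weighted partitions $(2,2)$, $(3,1)$, $(2,1,1)$) and then computes the centralizer descent directly, using exactly the two properties of $N$ isolated in the remark after Theorem~\ref{thm:centerroot}. Your write-up is in fact more detailed than the paper's proof, which simply lists the three $m=4$ forms and asserts $\alpha\in Z(\beta^{2})$ or $\alpha\in Z(\beta^{3})$ as a ``direct calculation,'' whereas you make the orbit-structure subcases, the periodic $3$-braid refinement ($\beta^{2}$ or $\beta^{3}$ central), and the root-uniqueness consistency step explicit.
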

\begin{proof}
If $m=3$, then we may assume that by taking an appropriate conjugate, the reducible braid $\beta$ can be written by $\beta = \sigma_{1}^{2}(\sigma_{1}^{k} \oplus 1)_{(2,1)}$. Thus the centralizer of $\beta$ is the free abelian group of rank two generated by $\sigma_{1}^{2}(1 \oplus 1)_{(2,1)}$ and $1(\sigma_{1} \oplus 1)_{(2,1)}$. Thus if $\alpha \in Z(\beta^{s})$ for some $s \geq 1$, then $\alpha \in Z(\beta)$ holds.

The proof of $m=4$ case is also a direct calculation of the centralizers. By taking an appropriate conjugation, we may assume that the braid $\beta$ has one of the following forms.
\begin{enumerate}
\item $\beta = \sigma_{1}^{p}(\sigma_{1}^{q}\oplus \sigma_{1}^{r})_{(2,2)}$.
\item $\beta = \sigma_{1}^{2p}(\beta_{int} \oplus 1)_{(3,1)}$ where $\beta_{int} \in B_{3}$.
\item $\beta = \beta_{ext}( \sigma_{1}^{p} \oplus 1 \oplus 1)_{(2,1,1)}$.
\end{enumerate}
In the first case we obtain $\alpha \in Z(\beta^{2})$. In the second and the third case, $\alpha \in Z(\beta^{2})$ or $\alpha \in Z(\beta^{3})$ holds.

\end{proof}

\section{Some computations of Hurwitz actions}

   Now we begin our study of the Hurwitz action. In this section we do some calculations, which will be used later. 
For two braid systems $\s = (\beta_{1},\ldots,\beta_{n})$ and $\s'=(\beta'_{1},\ldots,\beta'_{n})$ having the same degree and length, we say $\s$ and $\s'$ are conjugate if $\beta'_{i}= \alpha^{-1}\beta_{i}\alpha$ for some braid $\alpha$ and all $i=1,2,\ldots,n$.
Then there is a one-to-one correspondence between two Hurwitz orbits $\s \cdot B_{n}$ and $\s' \cdot B_{n}$ if two braid systems $\s$ and $\s'$ are conjugate. So we try to take a conjugate of braid systems so that computations are easier.
  
Since the pure braid group $P_{n}$ has finite index $n!$ in $B_{n}$, to classify the finite orbits of $B_{n}$, it is sufficient to consider the orbits of pure braid group $P_{n}$. 
For $i=1,2,\ldots,n-1$, let $c_{i}$ be the pure braid defined by 
\[ c_{i} = (\sigma_{1}^{-1}\cdots \sigma_{i-1}^{-1}) \sigma_{i}^{2} (\sigma_{i-1}\cdots\sigma_{1}) \]
 and 
$F_{n-1}$ be a subgroup of $P_{n}$ generated by $\{c_{1},c_{2},\ldots,c_{n-1}\}$. It is known that $F_{n-1}$ is a free group of rank $n-1$ and there exists a split exact sequence
\[ 1 \rightarrow F_{n-1} \rightarrow P_{n} \rightarrow P_{n-1} \rightarrow 1.\]
Hence the pure braid group $P_{n}$ can be described as a semi-direct products of free groups,
\[ P_{n}= P_{n-1} \ltimes F_{n-1} = F_{1} \ltimes F_{2} \ltimes \cdots \ltimes F_{n-1}. \]

See \cite{bi} for details.
Thus, to classify or estimate the size of finite Hurwitz orbit, it is sufficient to consider the $F_{n}$ actions.

Now we compute some actions of element of $F_{n}$.
\begin{lem}
\label{lem:calc}
Let $\s = (\beta_{1},\beta_{2},\ldots,\beta_{n})$ be a length $n$ braid system.
\begin{enumerate}
\item For all $k$ and $i$,
\begin{eqnarray*}
\hspace{1cm} \s \cdot c_{i}^{k} & = & (\beta_{1}^{(\beta_{1}\beta_{i+1})^{k}},\beta_{2}^{ (\beta_{i+1}\beta_{1})^{-k}(\beta_{1}\beta_{i+1})^{k}},\ldots, \\
& & \hspace{1cm} \beta_{i}^{(\beta_{i+1}\beta_{1})^{-k}(\beta_{1}\beta_{i+1})^{k}},\beta_{i+1}^{(\beta_{1}\beta_{i+1})^{k}},\beta_{i+2},\ldots, \beta_{n}).
\end{eqnarray*}
\item For $j>2$,
\[
\ \s \cdot (c_{1}c_{2}\cdots c_{j})^{k} = ( \beta_{1}^{ C^{k}}, \beta_{2}^{(\beta_{1}^{-1} C)^{-k} C^{k}}, \ldots,  \beta_{j+1}^{(\beta_{1}^{-1} C)^{-k} C^{k}},\beta_{j+2},\ldots,\beta_{n} ).\]
 where $C=\beta_{1}\beta_{2}\ldots\beta_{j+1}$.

\item Let $\Delta_{(i,j)} = (\sigma_{i}\sigma_{i+1}\cdots\sigma_{j})(\sigma_{i}\sigma_{i+1}\cdots \sigma_{j-1})\cdots(\sigma_{i}\sigma_{i+1})(\sigma_{i})$. Then 
\[ \s\cdot \Delta_{(i,j)}^{2p} = (\beta_{1},\beta_{2},\ldots,\beta_{i-1}, \beta_{i}^{C^{p}},\ldots,\beta_{j}^{C^{p}},\beta_{j+1},\ldots, \beta_{n}). \]
 where $C= \beta_{i}\beta_{i+1}\cdots \beta_{j}$.

\end{enumerate}
\end{lem}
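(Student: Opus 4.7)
I would prove each of the three identities by induction on the exponent, with the base case established by a direct computation from the definition of the Hurwitz action together with one organizing invariant: for every $l$, the Hurwitz move $\sigma_l$ preserves the product $\beta_l\beta_{l+1}$, and in particular $\sigma_i^2$ simultaneously conjugates the two affected entries by their common product $\beta_i\beta_{i+1}$.

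For part 1, I first verify the case $k=1$. Writing $c_i=(\sigma_1^{-1}\cdots\sigma_{i-1}^{-1})\sigma_i^2(\sigma_{i-1}\cdots\sigma_1)$, I apply the Hurwitz moves one letter at a time. The suffix $\sigma_{i-1}\cdots\sigma_1$ migrates $\beta_1$ out to position $i$ while cyclically shifting and conjugating the entries in between; the middle $\sigma_i^2$ then conjugates the migrated entry and $\beta_{i+1}$ by their product, which is the source of the operative conjugator $\beta_1\beta_{i+1}$; finally $\sigma_1^{-1}\cdots\sigma_{i-1}^{-1}$ undoes the migration and returns each new entry to its proper position. For the inductive step, the key observation is that the product of the new first and $(i+1)$-th coordinates of $\s\cdot c_i^k$ is again $\beta_1\beta_{i+1}$ (a conjugate of $\beta_1\beta_{i+1}$ by a power of itself), so applying $c_i$ once more simply advances the outer exponent by one; setting $A=\beta_1\beta_{i+1}$ and $B=\beta_{i+1}\beta_1$, the algebraic identity $B^{-k}A^k\cdot B^{-1}A = B^{-(k+1)}A^{k+1}$ handles the interior coordinates in parallel.

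For part 2, I again induct on $k$. The base case $k=1$ follows by iterating the $k=1$ case of part 1 for $i=1,2,\ldots,j$; the conjugators telescope cleanly once one uses that $C=\beta_1\beta_2\cdots\beta_{j+1}$ is Hurwitz-invariant. The inductive step mirrors the one in part 1. For part 3, the cleanest route is via centrality: $\Delta_{(i,j)}^{2}$ lies in the center of $\langle\sigma_i,\ldots,\sigma_j\rangle$, so its Hurwitz action commutes with that of every such $\sigma_l$ and must therefore act on the affected coordinates as simultaneous conjugation by a common element; product-invariance together with an explicit decomposition of $\Delta_{(i,j)}^{2}$ into $c_l$'s (to which part 2 applies) identifies that element as $C^p$.

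The main obstacle will be the bookkeeping in the $k=1$ case of part 1, where the cascade of six groups of Hurwitz moves introduces a conjugation at every step. Systematic use of the product invariants at each stage should keep the calculation tractable and avoid combinatorial errors; once the $k=1$ case of part 1 is in hand, parts 2 and 3 follow with little extra work.
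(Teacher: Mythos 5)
Your overall plan coincides with the paper's, which disposes of this lemma with the single phrase ``Direct computation'': you organize the same computation by induction on the exponent, using invariance of products under Hurwitz moves, and your $k=1$ analysis of part 1 (migration of $\beta_{1}$ to position $i$, the twist $\sigma_{i}^{2}$ conjugating the adjacent pair by their product, then the return migration) is correct, as is the observation that the new first and $(i+1)$-st entries again have product $\beta_{1}\beta_{i+1}$. However, the one explicit formula you commit to in the inductive step is false: with $A=\beta_{1}\beta_{i+1}$ and $B=\beta_{i+1}\beta_{1}$, the identity $B^{-k}A^{k}\cdot B^{-1}A=B^{-(k+1)}A^{k+1}$ holds only if $A$ and $B$ commute, which they do not in general. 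The source of the slip is that while $A$ is preserved by $c_{i}$, the reversed product $B$ is preserved only up to conjugacy: after $k$ applications the entries $\beta_{1}'=\beta_{1}^{A^{k}}$, $\beta_{i+1}'=\beta_{i+1}^{A^{k}}$ satisfy $\beta_{i+1}'\beta_{1}'=A^{-k}BA^{k}$, so the conjugator acting on the interior coordinates at the next application is $(A^{-k}BA^{k})^{-1}A=A^{-k}B^{-1}A^{k+1}$, not $B^{-1}A$. With this correction the telescoping does work, $B^{-k}A^{k}\cdot A^{-k}B^{-1}A^{k+1}=B^{-(k+1)}A^{k+1}$, so the gap is repaired by your own organizing principle provided you recompute all conjugators from the \emph{new} entries rather than the original ones; the same caveat applies to the inductive step of part 2, where $\beta_{1}^{-1}C$ is likewise only conjugacy-invariant ($D\mapsto C^{-k}DC^{k}$ for $D=\beta_{1}^{-1}C$).

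For part 3, the inference ``$\Delta_{(i,j)}^{2}$ is central in $\langle\sigma_{i},\ldots,\sigma_{j}\rangle$, hence its Hurwitz action commutes with each $\sigma_{l}$-action and must be simultaneous conjugation by a common element'' is not valid on its own: commuting with the generators' actions does not force the action to have that shape, so the burden falls entirely on the explicit decomposition into $c_{l}$-type elements that you mention, which you should carry out (note that only the case $p=1$ requires computation, since the full Coxeter element of the affected block is Hurwitz-invariant and induction on $p$ is then immediate). Be careful when executing it: with the definition of $\Delta_{(i,j)}$ given, the full twist involves strands $i$ through $j+1$, and the computation conjugates the entries $\beta_{i},\ldots,\beta_{j+1}$ by powers of $\beta_{i}\beta_{i+1}\cdots\beta_{j+1}$ --- already for $i=j=1$ one has $\Delta_{(1,1)}^{2}=\sigma_{1}^{2}$, which conjugates both $\beta_{1}$ and $\beta_{2}$ by $\beta_{1}\beta_{2}$ --- so the indices in the statement as printed are off by one, and your identification of the common conjugator as $C^{p}$ with $C=\beta_{i}\cdots\beta_{j}$ cannot literally come out of a correct computation.
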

\begin{proof}
Direct computation.
\end{proof}

\section{Partial Coxeter element}

   In this section, we provide a finiteness and infiniteness criterion of Hurwitz orbits for general degree and length by using the notion of (partial) Coxeter element. The partial Coxeter element argument provides a strong restriction for the finiteness of Hurwitz orbit and gives evidence that finite Hurwitz orbits with non-commutative entries are rare.
 
\begin{defn}
For a braid system $\s=(\beta_{1},\ldots,\beta_{n}) \in B_{m}^{n}$ and strictly increasing sequence of integers $I = \{1 \leq i_{1}<i_{2}<\cdots< i_{k} \leq m\}$, we define $C_{I}(\s)$, the {\it partial Coxeter element} of $\s$ by $C_{I}(\s) = \beta_{i_{1}}\beta_{i_{2}}\cdots\beta_{i_{k}}.$
For the sequence $I=\{1,2,3,\ldots,m\}$, we call $C_{I}(\s)$ the {\it (full) Coxeter element} of $\s$ and denote it by $C(\s)$.
\end{defn}

From the definition of the Hurwitz action, the full Coxeter element $C(\s)$ is invariant under the Hurwitz action, so it is an invariant of the Hurwitz orbit. 
On the other hand, the partial Coxeter element $C_{I}(\s)$ might dramatically change by the Hurwitz action. Even the Nielsen-Thurston types might change.
Now Lemma \ref{lem:calc} and the knowledge of the centralizers provide the following criterion of finiteness.

\begin{thm}[Partial Coxeter element criterion]
\label{thm:partialcox}
Let $\s=(\beta_{1},\beta_{2},\ldots,\beta_{n})$ be a braid system of degree $m$, length $n$ having the finite Hurwitz orbit $\s\cdot B_{n}$ 
and $I = \{1 \leq i_{1}<i_{2}<\cdots< i_{k} \leq n\}$ be a strictly increasing sequence of integers of length $k \geq 2$.
\begin{enumerate}
\item If $C_{I}(\s)$ is pseudo-Anosov, then $\beta_{i_{1}},\beta_{i_{2}},\ldots,\beta_{i_{k}}$ are irreducible and commutative.
\item If $C_{I}(\s)$ is reducible, then $\beta_{i_{1}},\beta_{i_{2}},\ldots,\beta_{i_{k}}$ preserves the same essential 1-submanifold. Especially, they are not pseudo-Anosov.
\item If $C_{\{1,2,\ldots,j\}}$ is periodic, then $\s\cdot (c_{1}c_{2}\cdots c_{j-1})^{r} = \s$ for some $1 \leq r \leq m! $.
\end{enumerate}
\end{thm}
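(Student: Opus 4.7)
The strategy in each case is to use Lemma \ref{lem:calc} to exhibit an element of $B_n$ whose Hurwitz action on $\s$ conjugates the entries indexed by $I$ by a controlled power of $C_I(\s)$, then combine finiteness of the orbit with Theorem \ref{thm:centerroot} to promote this to a commutativity relation with $C_I$ itself, and finally read off the claim from the structure of the centralizer of $C_I$ recalled in Section 2.

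For parts (1) and (2), a preliminary reduction lets us assume $I=\{1,\ldots,k\}$: the move $(\ldots,\beta_i,\beta_{i+1},\ldots)\cdot\sigma_i=(\ldots,\beta_{i+1},\beta_i^{\beta_{i+1}},\ldots)$ carries $\beta_{i+1}$ unchanged, so by repeatedly sliding non-$I$ entries past $I$-entries we gather $I$ to consecutive positions without altering the $I$-entries or the product $C_I(\s)$. Under this reduction, Lemma \ref{lem:calc}(3) applied to $\Delta_{(1,k)}^{2p}$ conjugates $\beta_1,\ldots,\beta_k$ simultaneously by $C_I^p$, so finiteness of the orbit forces $C_I^p\in Z(\beta_l)$ for all $l$ and some $p>0$. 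For (1), Theorem \ref{thm:centerroot}(2) upgrades this to $\beta_l\in Z(C_I)$; since $Z(C_I)$ for a pseudo-Anosov $C_I$ is a rank-two free abelian group all of whose elements are irreducible, the $\beta_l$'s pairwise commute and are irreducible. For (2), Theorem \ref{thm:centerroot}(3) yields $\beta_l\in Z(C_I^{(m-1)!})$, and the description of centralizers of reducible braids forces every such $\beta_l$ to preserve the standard curve system of $C_I^{(m-1)!}$, a common essential $1$-submanifold; this in particular rules out pseudo-Anosov type.

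For part (3), write $C=C_{\{1,\ldots,j\}}(\s)=\beta_1\cdots\beta_j$ (periodic by hypothesis) and $x=\beta_1^{-1}C=\beta_2\cdots\beta_j$. Lemma \ref{lem:calc} gives
\[\s\cdot(c_1\cdots c_{j-1})^k=(\beta_1^{C^k},\beta_2^{D_k},\ldots,\beta_j^{D_k},\beta_{j+1},\ldots,\beta_n),\qquad D_k:=x^{-k}C^k,\]
with Lemma \ref{lem:calc}(1) covering the base case $j=2$. I claim $r=m!$ works. Since $C$ is periodic, $C^{m(m-1)}$ lies in the center of $B_m$, hence so does $C^{m!}$; this makes $\beta_1^{C^{m!}}=\beta_1$, and, using centrality of $C^{m!}$, the identity $\beta_i^{D_{m!}}=\beta_i$ for $i=2,\ldots,j$ is equivalent to $x^{m!}\in Z(\beta_i)$. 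To verify the latter, apply Lemma \ref{lem:calc}(3) to $\Delta_{(2,j)}^{2p}$: this action conjugates $\beta_2,\ldots,\beta_j$ by $x^p$, and finiteness of the orbit produces $p>0$ with $x^p\in Z(\beta_i)$; Theorem \ref{thm:centerroot} then promotes this to $x^{m!}\in Z(\beta_i)$, since each of the exponents $m(m-1)$, $1$, $(m-1)!$ divides $m!$.

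The main obstacle is part (3): the conjugator $D_k$ is not itself a power of a single braid, so Theorem \ref{thm:centerroot} cannot be applied to $D_k$ directly. The key idea is to split $D_{m!}=x^{-m!}C^{m!}$ into a central factor $C^{m!}$ coming from periodicity of $C$ and a factor $x^{m!}$ whose centralization by each $\beta_i$ is obtained through a second, auxiliary $\Delta$-action on the sub-system $(\beta_2,\ldots,\beta_j)$; it is the compatibility of these two pieces that underpins the uniform bound $r\leq m!$.
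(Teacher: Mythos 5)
Your proposal is correct. For parts (1) and (2) it coincides with the paper's argument: gather the $I$-entries to consecutive positions by Hurwitz moves that leave them unchanged, apply Lemma \ref{lem:calc} (3) with $\Delta_{(1,k)}^{2p}$, use finiteness of the orbit to get $\beta_{i_l}\in Z(C_I(\s)^p)$ for some $p>0$, and read off the conclusion from the structure of centralizers (the paper argues directly from $Z(C_I(\s)^p)$ rather than routing through Theorem \ref{thm:centerroot}, but that difference is cosmetic). For part (3) you take a genuinely different route. The paper first finds $p>0$ with $\s\cdot(c_1\cdots c_{j-1})^p=\s$, then raises to the power $pq$, where $q$ is a period of $C$, so that the conjugator collapses to $(\beta_1^{-1}C)^{-pq}$; this yields $\beta_i\in Z((\beta_1^{-1}C)^{pq})$, and Theorem \ref{thm:centerroot} supplies an exponent $s\le m!$ --- at which point the final conclusion $r\le m!$ tacitly requires a common multiple of $q$ and $s$ still bounded by $m!$, a divisibility check the paper leaves implicit. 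You instead extract $x^p\in Z(\beta_i)$, with $x=\beta_2\cdots\beta_j$, directly from the auxiliary action of $\Delta_{(2,j)}^{2p}$ on the subsystem $(\beta_2,\ldots,\beta_j)$, upgrade to $x^{m!}\in Z(\beta_i)$ because each of the exponents $m(m-1)$, $1$, $(m-1)!$ appearing in Theorem \ref{thm:centerroot} divides $m!$, and then verify the single uniform value $r=m!$ outright, using that $C^{m!}$ is central (as $m(m-1)\mid m!$) to reduce conjugation by $D_{m!}=x^{-m!}C^{m!}$ to conjugation by $x^{-m!}$. This buys a cleaner and fully explicit assembly of the bound --- one fixed $r$ rather than an existence statement pieced together from $p$, $q$, and $s$ --- at no extra cost, since both arguments consume exactly the same inputs (Lemma \ref{lem:calc} and Theorem \ref{thm:centerroot}); your remark that Lemma \ref{lem:calc} (1) covers the base case $j=2$ also addresses the small-index caveat in the statement of Lemma \ref{lem:calc} (2) that the paper itself glosses over.
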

\begin{proof}
First we prove {\it 1.} and {\it 2.}
By considering the action of an appropriate braid, there is a braid system $\s'$ in the Hurwitz orbit of $\s$, which is written as $\s' = (\beta_{i_{1}},\beta_{i_{2}},\ldots, \beta_{i_{k}}, \beta'_{k+1},\ldots)$.
From Lemma \ref{lem:calc} (3), 
\[ \s' \cdot \Delta_{(1,k)}^{2p} =  ( \beta_{i_{1}}^{c^{p}},\beta_{i_{2}}^{c^{p}},\ldots, \beta_{i_{k}}^{c^{p}}, \beta'_{k+1},\ldots)\] 
 where $c=C_{I}(\s)$.
Since $\s \cdot B_{n}$ is finite, $\beta_{i_{j}} \in Z(c^{p})$ for some $p>0$. This means all of  $\beta_{i_{j}}$ are irreducible and commutative if $c$ is pseudo-Anosov, and all of $\beta_{i_{j}}$ preserve the same $1$-submanifold if $c$ is reducible.

 Next we prove {\it 3.}   
Let $C= \beta_{1}\beta_{2}\cdots\beta_{j}$ be the partial Coxeter element and $q$ be a period of $C$.
From Lemma \ref{lem:calc} {´it 2.},
\[ \s\cdot (c_{1}c_{2}\cdots c_{j-1})^{p} = (\beta_{1}^{ C^{p} },
 \beta_{2}^{(\beta_{1}^{-1}C)^{-p}C^{p}}, \ldots , \beta_{j}^{(\beta_{1}^{-1}C)^{-p}C^{p}},\beta_{j+1},\ldots, \beta_{n}). \]
Since $\s \cdot B_{n}$ is finite, we can find $0 < p$ satisfying $\s\cdot (c_{1}c_{2}\cdots c_{j-1})^{p}=\s$. Then 
\begin{eqnarray*}
\s\cdot (c_{1}c_{2}\cdots c_{j-1})^{pq}
 & = &  (\beta_{1},\beta_{2}^{(\beta_{1}^{-1}C)^{-pq}}, \ldots, \beta_{j}^{(\beta_{1}^{-1}C)^{-pq}},\beta_{j+1},\ldots,\beta_{n} ) \\
 & = &  (\beta_{1},\beta_{2},\beta_{3},\ldots,\beta_{n}).
\end{eqnarray*} 
Thus all of $\beta_{2},\beta_{3},\ldots,\beta_{j}$ belong to the centralizer of $(\beta_{1}^{-1}C)^{pq}$.
From Theorem \ref{thm:centerroot}, there exists $s \leq m!$ such that all of $\beta_{2},\beta_{3},\cdots,\beta_{j} \in Z( (\beta_{1}C^{-1})^{s})$.
Therefore, we conclude that $\s\cdot (c_{1}c_{2}\cdots c_{j-1})^{r} = \s$ for some $0<r \leq m!$.
\end{proof}

 These result imply that each entry of a braid system with finite Hurwitz orbit must satisfy the following conditions.
 \begin{itemize}
 \item If its full Coxeter element is pseudo-Anosov, then all of its entries must be commutative.
 \item If its full Coxeter element is reducible, then all of its entries must not be pseudo-Anosov and preserve the same 1-submanifold $\mC$.
 \end{itemize}

 Using this condition, sometimes we can easily check whether the Hurwitz orbit is finite or not.

\begin{exam}

Now we give some examples.

\begin{enumerate}
\item Let $\s =(\sigma_{1},\sigma_{2}^{2},\sigma_{1})$. Each entry of $\s$ is reducible and the full Coxeter element is also reducible. However, $\sigma_{1}$ and $\sigma_{2}$ do not preserve the same essential $1$-submanifolds, so we conclude that $\s$ has infinite Hurwitz orbit.  
\item Let $\s = (\sigma_{1},\sigma_{1},\sigma_{1},\sigma_{1},\sigma_{2})$. It is easily checked that braid systems $(\sigma_{1},\sigma_{2})$, $(\sigma_{1},\sigma_{1},\sigma_{2})$ and $(\sigma_{1},\sigma_{1},\sigma_{1},\sigma_{2})$ have finite Hurwitz orbits. However, the Hurwitz orbit of $\s$ is infinite because the full Coxeter element is pseudo-Anosov but $\sigma_{1}$ is reducible.
\end{enumerate}
\end{exam}

As these examples suggest, a braid system might have infinite Hurwitz orbit even if its entries have simple relations.

\section{Classification of finite Hurwitz orbits}

   Now we begin a classification of finite Hurwitz orbits. 
   
\subsection{Length two braid systems}

First of all, we prove Theorem \ref{thm:b2finiteness}. 
  
\begin{proof}[Proof of Theorem \ref{thm:b2finiteness}]
From Lemma \ref{lem:calc} {\it 1.}, $ (\beta_{1},\beta_{2})\sigma_{1}^{2p} =(\beta_{1}^{(\beta_{1}\beta_{2})^{p}}, \beta_{2}^{(\beta_{1}\beta_{2})^{p}})$ holds. Since the Hurwitz orbit of $\s$ is finite, $\beta_{1},\beta_{2} \in Z((\beta_{1}\beta_{2})^{p})$ for some $p>0$. From Theorem \ref{thm:centerroot}, $ p \leq \max\{(m-1)!, m\}$, so the conclusion holds.
\end{proof}

 As in the remark after Theorem \ref{thm:centerroot}, this upper bound is not sharp for general $m$.
For $m=3,4$, we give an accurate upper bound.

\begin{cor}
Let $\s$ be a degree $m$, length $2$ braid system having finite Hurwitz orbit. 
\begin{enumerate}
\item If $m=3$, $\sharp( \s \cdot B_{2} )\leq 6$. 
\item If $m=4$, $\sharp( \s \cdot B_{2}) \leq 8$. 
\end{enumerate}
\end{cor}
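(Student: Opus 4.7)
The plan is to reduce the problem to bounding the orbit under the pure braid group $P_{2}=\langle\sigma_{1}^{2}\rangle$ and then multiplying by the index $[B_{2}:P_{2}]=2$. By Lemma~\ref{lem:calc}(1) with $i=1$, the action of $\sigma_{1}^{2p}$ is
\[
 \s\cdot\sigma_{1}^{2p}=(\beta_{1}^{C^{p}},\beta_{2}^{C^{p}}),\qquad C:=\beta_{1}\beta_{2},
\]
so the size of the $P_{2}$-orbit equals the smallest $p>0$ with $\beta_{1},\beta_{2}\in Z(C^{p})$. Thus it suffices to bound this $p$ by $3$ when $m=3$ and by $4$ when $m=4$.

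I would split on the Nielsen--Thurston type of $C$. If $C$ is pseudo-Anosov, Theorem~\ref{thm:centerroot}(2) gives $p=1$ immediately. If $C$ is reducible, Proposition~\ref{prop:exactbound} yields $p=1$ when $m=3$ and $p\le 3$ when $m=4$. The periodic case is handled by the explicit classification recalled in Section~2.1: in $B_{3}$, $C$ is conjugate to a power of $\sigma_{1}\sigma_{2}$ or of $\sigma_{1}\sigma_{2}\sigma_{1}$, whose third and second powers respectively are central, so $C^{3}$ is central and $p\le 3$; in $B_{4}$, $C$ is conjugate to a power of $\sigma_{1}\sigma_{2}\sigma_{3}$ or of $\sigma_{1}\sigma_{2}\sigma_{3}\sigma_{1}$, whose fourth and third powers respectively equal $\Delta^{2}$, so $C^{4}$ is central and $p\le 4$. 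In every case the $P_{2}$-orbit has size at most $3$ when $m=3$ and at most $4$ when $m=4$, and multiplying by $[B_{2}:P_{2}]=2$ gives the asserted bounds $6$ and $8$.

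The only real work is the periodic case: one must check that the value of $p$ extracted from the classification is actually optimal (or at least not larger than claimed), which amounts to the elementary remark that once $C^{p}$ lies in the centre of $B_{m}$, automatically $\beta_{1},\beta_{2}\in Z(C^{p})=B_{m}$. The reducible case just quotes Proposition~\ref{prop:exactbound}, and the pseudo-Anosov case is a one-line application of Theorem~\ref{thm:centerroot}(2), so the main obstacle is mostly bookkeeping: combining the three exponents into a single bound and then accounting for the index-two extension from $P_{2}$ to $B_{2}$.
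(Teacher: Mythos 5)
Your proof is correct and follows essentially the same route the paper intends for this corollary: the orbit computation $\s\cdot\sigma_{1}^{2p}=(\beta_{1}^{C^{p}},\beta_{2}^{C^{p}})$ from Lemma~\ref{lem:calc}(1) as in the proof of Theorem~\ref{thm:b2finiteness}, with the crude bound $\max\{(m-1)!,m\}$ sharpened case-by-case via Theorem~\ref{thm:centerroot}(2) for pseudo-Anosov $C$, Proposition~\ref{prop:exactbound} for reducible $C$, and the Eilenberg classification for periodic $C$. One cosmetic slip worth fixing: when $C$ is conjugate to a power of $\sigma_{1}\sigma_{2}\sigma_{1}$ (resp.\ $\sigma_{1}\sigma_{2}\sigma_{3}\sigma_{1}$) it is $C^{2}$ (resp.\ $C^{3}$) that is central, and $C^{3}$ (resp.\ $C^{4}$) need not be central in that subcase, but this only makes $p$ smaller, so your bounds $p\leq 3$ for $m=3$ and $p\leq 4$ for $m=4$, and hence the asserted orbit bounds $6$ and $8$, stand.
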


The above upper bounds are exact. $\sharp ((\sigma_{1}^{-1},\sigma_{1}^{2}\sigma_{2}) \cdot B_{2}) = 6$ and $\sharp ((\sigma_{1},\sigma_{2}\sigma_{3}) \cdot B_{2}) = 8$.
We remark that there is no universal bound for $\sharp (\beta_{1},\beta_{2})\cdot B_{2}$ if we do not fix the degree $m$. For $m \geq 4$, the size of the Hurwitz orbit of the braid system $(\sigma_{1},\sigma_{2}\sigma_{3}\cdots\sigma_{m-1})$ is $2m$.

\subsection{Normal form of periodic $3$-braids}

Next we study degree $3$ braid systems, where difficulties due to the fact $B_{3}$ is not abelian arise.

Recall that the centralizer of a $3$-braid $\beta$ is abelian unless $\beta$ is central in $B_{3}$. Our classification result relies on this special feature of $B_{3}$.
 In this subsection, we briefly summarize the dual Garside structure of $B_{3}$ and the left normal forms and prepare some lemmas which will be used. See \cite{bkl} for details. 
 
 Let $a_{1,2}= \sigma_{1}$, $a_{2,3}=\sigma_{2}$, $a_{1,3}= \sigma_{2}^{-1}\sigma_{1}\sigma_{2}$ and $\delta=a_{1,2}a_{2,3}=a_{2,3}a_{1,3}=a_{1,3}a_{1,2}$.
Using the braids $\{a_{1,2},a_{2,3},a_{1,3}\}$, the braid group $B_{3}$ is presented by 
\[ B_{3} = \langle a_{1,2},a_{2,3},a_{1,3} \:|\: a_{1,2}a_{2,3}=a_{2,3}a_{1,3}=a_{1,3}a_{1,2} \rangle\] 

 Each 3-braid $\beta \in B_{3}$ has the one of the following unique word representative $N(\beta)$, called the (left-greedy) normal form. 
\[ N(\beta) = \left \{
\begin{array}{l} 
\delta^{m}a_{1,2}^{p_{1}}a_{1,3}^{p_{2}}a_{2,3}^{p_{3}}a_{1,2}^{p_{4}}\cdots a_{*,*}^{p_{k}}\\
\delta^{m}a_{1,3}^{p_{1}}a_{2,3}^{p_{2}}a_{1,2}^{p_{3}}a_{1,3}^{p_{4}}\cdots a_{*,*}^{p_{k}}\\
\delta^{m}a_{2,3}^{p_{1}}a_{1,2}^{p_{2}}a_{1,3}^{p_{3}}a_{2,3}^{p_{4}}\cdots a_{*,*}^{p_{k}}\\
\end{array}
\right.
 \]
 where $p_{i}$ is a positive integer.
In the normal form, the integer $m$ is called the {\it supremum} of $\beta$ and denoted by $\sup(\beta)$. 
We define $d(\beta)$, the {\it depth} of $\beta$, by $d(\beta)=k$.

\begin{lem}
\label{lem:per}
For a periodic $3$-braid $\beta$, if $d(\beta) \neq 0$, $d(\beta) + \sup(\beta) \equiv 2\: (mod\:3)$. 
\end{lem}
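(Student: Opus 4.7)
My plan is to pass to the quotient $B_3/Z(B_3)\cong PSL_2(\mathbb{Z})\cong\mathbb{Z}/3\ast\mathbb{Z}/2$, which is generated by $x=\overline{\delta}$ and $y=\overline{\epsilon}$, where $\epsilon=\sigma_1\sigma_2\sigma_1$, so $\overline{\epsilon}^2=1$ and $\overline{\delta}^3=1$. Because $Z(B_3)=\langle\delta^3\rangle$ and multiplying $\beta$ by $\delta^3$ shifts $\sup(\beta)$ by $3$ while leaving $d(\beta)$ fixed, the residue $\sup(\beta)+d(\beta)\pmod 3$ is well-defined on $B_3/Z$. Moreover $d(\beta)=0$ is equivalent to $\beta=\delta^m$, i.e.\ to $\overline{\beta}\in\langle x\rangle$; so the hypothesis $d(\beta)\neq 0$ translates into: the unique reduced form
\[
\overline{\beta}=x^{a_0}\,y\,x^{a_1}\,y\cdots y\,x^{a_n}
\]
in the free product has $n\geq 1$, with $a_0,a_n\in\{0,1,2\}$ and $a_i\in\{1,2\}$ for interior $i$.

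The first step computes $\sup+d\pmod 3$ from this reduced form. I would lift to $\beta_0=\delta^{a_0}\epsilon\delta^{a_1}\cdots\epsilon\delta^{a_n}$, expand each $\epsilon$ as $\delta\cdot a_{1,2}$, and push every $\delta$-factor to the left using $a_{1,2}\delta^r=\delta^r f(r)$, where $f(r)=\delta^{-r}a_{1,2}\delta^r\in\{a_{1,2},a_{1,3},a_{2,3}\}$ cycles according to $r\bmod 3$. This produces $\beta_0=\delta^{T_0}f(T_1)\cdots f(T_n)$ for explicit partial sums $T_j$, and each transition $f(T_j)\to f(T_{j+1})$ advances by $-(a_j+1)\bmod 3$. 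Since interior $a_j\in\{1,2\}$, this advance is either $0$ (powers of a common generator merge) or $+1$ (a legitimate forward step in the dual Garside cycle), and never $-1$ (which would let another $\delta$ be extracted), so the displayed expression is already the left-greedy dual normal form. Tallying the $\delta$-exponent and the number of runs then yields
\[
\sup(\beta_0)+d(\beta_0)=a_0+a_n+3n-1\equiv a_0+a_n-1\pmod 3.
\]

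The second step shows $a_0+a_n\equiv 0\pmod 3$ whenever $\overline{\beta}$ is torsion with $n\geq 1$. Every torsion element of $\mathbb{Z}/3\ast\mathbb{Z}/2$ is conjugate to an element of a factor, so $\overline{\beta}=ghg^{-1}$ for some $h\in\{y,x,x^2\}$. Writing $g$ in reduced form $x^{c_0}y\cdots yx^{c_p}$ (or with $c_0$ or $c_p$ equal to $0$), the middle junction $x^{c_p}\cdot h\cdot x^{-c_p}$ either collapses to a single $x$-syllable (when $h\in\{x,x^2\}$) or survives unsimplified as $x^{c_p}yx^{-c_p}$ (when $h=y$); in every subcase the outer syllables of $ghg^{-1}$ descend from $x^{c_0}$ and $x^{-c_0}$, giving $a_0=c_0$ and $a_n\equiv -c_0\pmod 3$ (with both zero when $c_0=0$). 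Hence $a_0+a_n\equiv 0$, and combining with the formula above yields $\sup(\beta)+d(\beta)\equiv -1\equiv 2\pmod 3$ as required. The main obstacle is the bookkeeping in the second step---confirming that pushing the $\delta$'s leftward actually terminates in dual normal form and accurately tallying the run count---while the last step is just a short case split on whether $g$'s reduced form begins or ends in an $x$- or $y$-syllable.
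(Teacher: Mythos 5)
Your argument is correct, and it is a genuinely different proof from the paper's. The paper proves the contrapositive entirely within the normal-form calculus: if $d(\beta)+\sup(\beta)\not\equiv 2\pmod 3$, then after a $\delta$-conjugation (which changes neither quantity nor periodicity) the tail of $N(\beta)$ ends in a letter that does not fuse with its initial letter into $\delta$, so the six-fold concatenation is already the left-greedy normal form of $\beta^{6}$ and has positive depth; since every periodic $3$-braid satisfies $\beta^{6}\in\langle\delta^{3}\rangle=Z(B_{3})$, such a $\beta$ cannot be periodic. You instead compute the residue directly in $B_{3}/Z(B_{3})\cong\mathbb{Z}/3\ast\mathbb{Z}/2$, and your two key claims both check out. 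With $T_{j}=\sum_{i=j}^{n}a_{i}+(n-j)$, the transitions $f(T_{j})\to f(T_{j+1})$ advance by $-(a_{j}+1)\equiv 0$ or $+1 \pmod 3$ and never take the backward step, which is exactly the adjacent pair multiplying to $\delta$; hence $\delta^{T_{0}}f(T_{1})\cdots f(T_{n})$ is left-greedy normal, and your tally is even exact rather than merely mod $3$: with $u$ interior exponents equal to $1$ and $v=n-1-u$ equal to $2$, one gets $\sup+d=(a_{0}+a_{n}+u+2v+n)+(1+u)=a_{0}+a_{n}+3n-1$. Likewise the cancellation in $ghg^{-1}$ is local (the middle junction collapses to a single nontrivial syllable flanked by syllables of the other factor), so a torsion element with $n\geq 1$ has outer syllables $x^{c_{0}}$ and $x^{-c_{0}}$, giving $a_{0}+a_{n}\equiv 0\pmod 3$. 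As for what each route buys: the paper's proof is shorter and self-contained in the dual Garside language, but it verifies only one of the three cases of $\sup \bmod 3$ explicitly and certifies only the congruence; yours proves a sharper statement, namely an exact formula for $\sup+d$ of \emph{any} $3$-braid with $d\neq 0$ in terms of its free-product syllable form, with periodicity entering only through the standard fact that torsion in a free product is conjugate into a factor, at the cost of importing the isomorphism $B_{3}/Z(B_{3})\cong PSL_{2}(\mathbb{Z})$. One trivial slip: the bookkeeping you flag as belonging to the ``second step'' (termination of the $\delta$-pushing and the run count) is actually your first step; both steps are nonetheless sound.
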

\begin{proof}

Let $\beta$ be a periodic $3$-braid and $s=\sup(\beta)$, $d=d(\beta)$.
We only prove $s \equiv 0 \: (mod\:3) $ case. Other cases are similar.
Assume that $d \not \equiv 2 \: (mod\:3)$. 
Then by taking a conjugation by $\delta$, we can assume that the normal form of $\beta$ is either
 \[ N(\beta) = 
\left\{
\begin{array}{l}
\delta^{3s'}a_{1,2}^{p_{1}}\cdots a_{2,3}^{p_{d}} \textrm{  or} \\
\delta^{3s'}a_{1,2}^{p_{1}}\cdots a_{1,2}^{p_{d}}. 
\end{array}
\right.
\]

In either case, the normal form of $\beta^{6}$ is given by
 \[ N(\beta^{6}) =
\left\{ 
\begin{array}{l}
\delta^{18s'}(a_{1,2}^{p_{1}}\cdots a_{2,3}^{p_{d}})(a_{1,2}^{p_{1}}\cdots a_{2,3}^{p_{d}})\cdots (a_{1,2}^{p_{1}}\cdots a_{2,3}^{p_{d}}) \textrm{ or} \\
\delta^{18s'}(a_{1,2}^{p_{1}}\cdots a_{1,2}^{p_{d}+p_{1}})(a_{1,3}^{p_{2}}\cdots a_{2,3}^{p_{d}+p_{1}})\cdots(a_{1,3}^{p_{2}}\cdots a_{2,3}^{p_{d}}). 
\end{array}
\right.
\]
Therefore, $\beta$ is not periodic.
\end{proof}

Now we prove the key lemma which plays an important role in proving our finiteness results for degree $3$ braid systems.

\begin{lem}
\label{lem:key}
Let $\alpha$ be a periodic $3$-braid whose period is $3$.
Then for $\beta,\gamma \in B_{3}$, not all of $\beta\gamma,\beta^{\alpha}\gamma,\beta^{\alpha^{2}}\gamma$ are periodic unless either $\beta$ or $\gamma$ belongs to $Z(\alpha)$.
\end{lem}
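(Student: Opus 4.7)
The plan is to pass to the quotient by the center and then carry out a free-product normal form analysis. Since $\alpha^{3}\in Z(B_{3})$ and $\alpha$ is non-central, its image $\bar\alpha$ in $\overline{B_{3}}:=B_{3}/Z(B_{3})$ has order $3$; after conjugating $\alpha$ inside $B_{3}$, which preserves the statement of the lemma, we may take $\alpha=\delta$, so $Z(\alpha)=\langle\delta\rangle$. The quotient $\overline{B_{3}}$ is isomorphic to $PSL(2,\mathbb{Z})=\langle s,t\mid s^{3}=t^{2}=1\rangle=\langle s\rangle*\langle t\rangle$ via $\bar\delta\mapsto s$ and $\bar\Delta\mapsto t$, and by the standard normal form theorem for free products a $3$-braid $\eta$ is periodic if and only if the cyclically reduced free-product normal form of $\bar\eta$ has length at most $1$. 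Moreover $\beta\in Z(\alpha)$ iff $\bar\beta\in\langle s\rangle$.

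Arguing by contradiction, assume each $\bar\eta_{i}=\bar\beta^{s^{i}}\bar\gamma$ is torsion and $\bar\beta,\bar\gamma\notin\langle s\rangle$. Write the normal forms
\[
\bar\beta=s^{a_{0}}ts^{a_{1}}t\cdots ts^{a_{p}},\qquad
\bar\gamma=s^{b_{0}}ts^{b_{1}}t\cdots ts^{b_{q}},
\]
with $p,q\geq 1$, interior $a_{j},b_{j}\in\{1,2\}$, and boundary $a_{0},a_{p},b_{0},b_{q}\in\{0,1,2\}$. Conjugation by $s$ leaves the interior letters unchanged and shifts $a_{0}\mapsto a_{0}-i$, $a_{p}\mapsto a_{p}+i$ modulo $3$. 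The inner seam of $\bar\beta^{s^{i}}\bar\gamma$ begins to cancel precisely when $a_{p}+b_{0}+i\equiv 0\pmod 3$, after which the cascade continues only if the fixed internal conditions $a_{p-j}+b_{j}\equiv 0\pmod 3$ hold for successive $j\geq 1$; since these further conditions are independent of $i$, the inner cascade depth is entirely determined by the single residue class $i\equiv -(a_{p}+b_{0})\pmod 3$. By the same reasoning the outer cyclic reduction between $s^{a_{0}-i}$ and $s^{b_{q}}$ can start only for $i\equiv a_{0}+b_{q}\pmod 3$, again a single residue class.

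The union of these two ``cancellation-permitting'' residue classes in $\mathbb{Z}/3\mathbb{Z}$ has at most two elements, so there exists $i^{*}\in\{0,1,2\}$ for which neither seam admits any reduction. For this $i^{*}$ the concatenation $\bar\beta^{s^{i^{*}}}\bar\gamma$ is already a cyclically reduced normal form, containing $p+q\geq 2$ copies of $t$ and hence of length $\geq 2$, which contradicts $\bar\eta_{i^{*}}$ being torsion. The main obstacle of the argument lies in handling the boundary situations in which some $a_{j},b_{j}$ vanish (so the relevant seam involves a $t$ rather than an $s^{?}$): the criterion for cancellation then takes a slightly different form, but in every case it still selects a unique residue of $i$ modulo $3$, so the counting argument goes through. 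Lemma \ref{lem:per}, while not formally needed for this quotient-level argument, supplies a helpful consistency check on the mod-$3$ arithmetic.
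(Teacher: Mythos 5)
Your proof is correct, but it runs along a genuinely different track from the paper's. The paper stays inside $B_{3}$ and works with the left-greedy normal form of the dual Garside structure: it arranges $\alpha=\delta^{p}$, observes that conjugation by $\delta$ cyclically permutes the band generators $a_{1,2},a_{1,3},a_{2,3}$, and then shows that two of the three products $\beta^{\delta^{e}}\gamma$, $\beta^{\delta^{f}}\gamma$ have normal forms whose values of $\sup+d$ are \emph{consecutive integers} (depending on whether the final letter of $\beta$'s part merges with the initial letter of $\gamma$'s part); since Lemma \ref{lem:per} forces $\sup+d\equiv 2\ (\mathrm{mod}\ 3)$ for a nontrivially periodic $3$-braid, at most one of the two can be periodic. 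You instead pass to $B_{3}/Z(B_{3})\cong PSL(2,\mathbb{Z})\cong \mathbb{Z}/3 * \mathbb{Z}/2$, translate ``periodic'' into ``torsion, i.e.\ cyclically reduced length $\leq 1$,'' and count: the inner seam of $\bar\beta^{s^{i}}\bar\gamma$ can cancel only for the single residue $i\equiv-(a_{p}+b_{0})$, the wrap-around only for $i\equiv a_{0}+b_{q}$, so some $i^{*}\in\{0,1,2\}$ yields a cyclically reduced word with $p+q\geq 2$ letters $t$, hence of infinite order. The two arguments are cousins --- both track how twisting by $\delta$ (resp.\ $s$) moves the seam --- but yours replaces the paper's key congruence Lemma \ref{lem:per} with standard free-product torsion theory, which makes that lemma genuinely dispensable (your closing remark that it is only a consistency check is accurate) and makes the mod-$3$ counting transparent; the paper's version stays within the Garside toolkit it uses elsewhere and extracts the contradiction from only two of the three conjugates, which is marginally sharper. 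Two small points in your write-up: the reduction to $\alpha=\delta$ should note that $\alpha$ may only be conjugate to $\delta^{2}$ times a central element, which is harmless because $\{\bar\beta^{\bar\alpha^{i}}\}=\{\bar\beta^{s^{i}}\}$ as sets in either case; and your worried ``boundary situations'' do not actually behave differently --- when $a_{p}$, $b_{0}$, $a_{0}$ or $b_{q}$ vanish the criteria $i\equiv-(a_{p}+b_{0})$ and $i\equiv a_{0}+b_{q}$ hold verbatim, so no separate case analysis is needed.
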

\begin{proof}
Assume that both $\beta$ and $\gamma$ do not belong to $Z(\alpha)$.
By considering a conjugate of the braid system,
we can assume that $\alpha = \delta^{p}$ and the normal form of $\gamma$ is written as 
\[ N(\gamma) = \delta^{g} a_{1,2}^{p_{1}}a_{1,3}^{p_{2}}\cdots a_{*,*}^{p_{k}}. \]
Since both $\gamma$ and $\beta$ do not commute with $\alpha=\delta^{p}$, we obtain $d(\gamma) \neq 0 $ and $d(\beta) \neq 0$. Now let us denote the normal form of $\beta$ by
\[ N(\beta) = \delta^{b} \cdots a_{i,j}^{q}.\] 
Then for some distinct $e,f \in \{1,2,3\}$, the normal forms of $\beta^{\delta^{e+g}}$ and $\beta^{\delta^{f+g}}$ are given by
\[ 
N(\beta^{\delta^{e+g}}) = \delta^{b} \cdots a_{1,2}^{q}, \;\;
N(\beta^{\delta^{f+g}}) = \delta^{b} \cdots a_{2,3}^{q}. 
\]
 Now the normal forms of $\beta^{\delta^{e}} \gamma$ and $\beta^{\delta^{f}} \gamma$ are written as 
\[ 
N(\beta^{\delta^{e}} \gamma) = \delta^{b+g} \cdots a_{1,2}^{q+p_{1}} a_{1,3}^{p_{2}} \cdots a_{*,*}^{p_{k}}, \;\;
N(\beta^{\delta^{f}} \gamma) = \delta^{b+g} \cdots a_{2,3}^{q}a_{1,2}^{p_{1}} a_{1,3}^{p_{2}} \cdots a_{*,*}^{p_{k}} .
 \]

Thus, $\sup(\beta^{\delta^{e}} \gamma) + d(\beta^{\delta^{e}} \gamma) = b+g+q+k-1$ and $\sup(\beta^{\delta^{f}} \gamma) + d(\beta^{\delta^{f}} \gamma) = b+g+q+k $. By Lemma \ref{lem:per}, we conclude that not both of $\beta^{\delta^{e}} \gamma$ and $\beta^{\delta^{f}} \gamma$ are periodic.
\end{proof}

\subsection{Exponent sum restriction}

In this subsection, we study the exponent sum of the entries of braid systems having finite Hurwitz orbit. 
We observe the following simple, but crucial lemma about degree $3$ braid systems having finite Hurwitz orbits.

\begin{lem}
\label{lem:restriction}
Let $\s = (\beta_{1},\cdots,\beta_{l})$ be a degree $3$ braid system having finite Hurwitz orbit and assume that all of $\beta_{i}$ are not central in $B_{3}$.
 If $e(\beta_{i_{1}})+e(\beta_{i_{2}}) + \cdots + e(\beta_{i_{k}}) \not \equiv \pm 2, 3\;\;(mod\: 6)$ for some $1\leq i_{1} < i_{2} < \cdots < i_{k} \leq l $ $(1 < k <l)$, then all of its entry $\beta_{i}$ are mutually commutative.
\end{lem}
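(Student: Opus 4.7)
My plan proceeds in three steps. First, Hurwitz rearrangement realises every permutation of positions up to conjugation of entries, preserving both exponent sums and the non-centrality hypothesis, so I may assume $I=\{1,2,\ldots,k\}$. Set $C:=\beta_{1}\beta_{2}\cdots\beta_{k}$. Writing out the conjugacy representatives $(\sigma_{1}\sigma_{2})^{m}$ (exponent sum $2m$) and $(\sigma_{1}\sigma_{2}\sigma_{1})^{m}$ (exponent sum $3m$) of periodic 3-braids, one checks that every non-central periodic 3-braid has exponent sum congruent to $\pm 2$ or $3\pmod 6$; hence the hypothesis forces $C$ to be pseudo-Anosov, reducible, or central.

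In the pseudo-Anosov and reducible cases I would use Lemma \ref{lem:calc}(3): $\s\cdot\Delta_{(1,k)}^{2p}$ conjugates the first $k$ entries by $C^{p}$ and fixes the rest, so finiteness of the Hurwitz orbit forces $\beta_{i}\in Z(C^{p})$ for some $p>0$ and every $i\in I$. Theorem \ref{thm:centerroot} (pseudo-Anosov) or Proposition \ref{prop:exactbound}(1) (reducible, $m=3$) lifts this to $\beta_{i}\in Z(C)$, which is free abelian of rank two in $B_{3}$ because $C$ is non-central. Thus $\beta_{1},\ldots,\beta_{k}$ mutually commute inside one abelian subgroup. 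To extend commutativity to every $\beta_{j}$ with $j>k$, I would Hurwitz-rearrange $\beta_{j}$ into the first $k$ positions and rerun the argument; since distinct maximal abelian subgroups of $B_{3}$ containing a common non-central element must coincide, $\beta_{j}$ lands in the same abelian centralizer as the other entries.

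The main obstacle is the case $C\in Z(B_{3})$, where conjugation by powers of $C$ is trivial and the preceding argument collapses. Here I would turn to Lemma \ref{lem:calc}(2) with $j=k-1$: centrality of $C$ reduces $\s\cdot(c_{1}\cdots c_{k-1})^{p}$ to conjugation of $\beta_{2},\ldots,\beta_{k}$ by $\beta_{1}^{p}$, and finiteness gives $\beta_{i}\in Z(\beta_{1}^{p})$ for $i=2,\ldots,k$. If $\beta_{1}$ is pseudo-Anosov or reducible, Theorem \ref{thm:centerroot} lifts this to $\beta_{i}\in Z(\beta_{1})$ and the previous step concludes. The truly delicate sub-case is $\beta_{1}$ non-central periodic: since the period of a periodic 3-braid divides $6$, some power of $\beta_{1}$ is a non-central element $\alpha$ of period exactly $3$, and I would then invoke Lemma \ref{lem:key} together with the normal-form count of Lemma \ref{lem:per}. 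The finite orbit, combined with Theorem \ref{thm:partialcox}(3) applied to a suitable periodic sub-Coxeter element, produces for every putative $\beta_{j}\notin Z(\alpha)$ three products of the form $\beta_{j}^{\alpha^{a}}\beta_{1}$ that must all be periodic, contradicting Lemma \ref{lem:key} unless $\beta_{j}\in Z(\alpha)=Z(\beta_{1})$. This pins every entry into one abelian centralizer and completes the proof.
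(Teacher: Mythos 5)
Your opening is sound and matches the paper's toolkit: the reduction to $I=\{1,\ldots,k\}$, the Eilenberg exponent-sum computation showing a non-central periodic $3$-braid has exponent sum $\equiv\pm2$ or $3\pmod 6$, and the use of Lemma \ref{lem:calc}(3) together with Theorem \ref{thm:centerroot} and Proposition \ref{prop:exactbound}(1) to force $\beta_{1},\ldots,\beta_{k}$ into the abelian group $Z(C)$ when $C=\beta_{1}\cdots\beta_{k}$ is pseudo-Anosov or reducible. The first genuine gap is your extension to the entries $\beta_{j}$ with $j>k$. Hurwitz-rearranging $\beta_{j}$ into the first $k$ positions replaces the block sum $e(\beta_{1})+\cdots+e(\beta_{k})$ by $e(\beta_{1})+\cdots+e(\beta_{k})-e(\beta_{i})+e(\beta_{j})$, which can perfectly well be $\equiv\pm2$ or $3\pmod 6$, so the hypothesis you need in order to ``rerun the argument'' on the new block is destroyed; and the maximal-abelian-subgroup remark begs the question, since it presupposes that $\beta_{j}$ already lies in some abelian subgroup meeting the block, which is exactly what is to be proved. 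The paper is structured precisely to avoid this: it only uses moves from $B_{k}\times B_{l-k}$, which preserve the exponent sum of the first block, and instead of re-testing a modified block it sets $C=\beta_{1}\cdots\beta_{k-1}$ and applies the hypothesis twice, to $\s$ and to $\s\cdot\sigma_{k}^{2}$ --- whose first $k-1$ entries are unchanged and whose $k$-th entry is $\beta_{k}^{\beta_{k+1}}$, with the same exponent sum. This places both $\beta_{k}$ and $\beta_{k}^{\beta_{k+1}}$ in $Z(C)$, where a direct comparison (an infinite cyclic group generated by an element of nonzero exponent sum when $C$ is periodic non-central; dilatation $1$ plus exponent sum $0$ when $C$ is pseudo-Anosov; uniqueness of the invariant curve when $C$ is reducible) forces $\beta_{k}=\beta_{k}^{\beta_{k+1}}$. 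That two-conjugate comparison is the key idea missing from your proposal; once it gives the cross-boundary commutation, rearranging within the two blocks handles every pair $\beta_{i},\beta_{j}$ with $i\leq k<j$.

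The second gap is in your fallback for $C=\beta_{1}\cdots\beta_{k}$ central. Finiteness via Lemma \ref{lem:calc}(2) only yields $\beta_{i}\in Z(\beta_{1}^{p})$ for whatever return time $p$ the orbit happens to have; if $\beta_{1}$ is periodic and $p$ is a multiple of its period, then $\beta_{1}^{p}$ is central and the containment is vacuous, and you have no control over $p$. Worse, your escape route in the ``delicate sub-case'' is false as stated: if $\beta_{1}$ is conjugate to an odd power of $\sigma_{1}\sigma_{2}\sigma_{1}$, it has period $2$, and no power of a period-$2$ element has period exactly $3$, so the element $\alpha$ required by Lemma \ref{lem:key} need not exist; the claim that Theorem \ref{thm:partialcox}(3) produces three periodic products $\beta_{j}^{\alpha^{a}}\beta_{1}$ is asserted rather than derived. (Theorem \ref{thm:centerroot}(1) gives no help either, since for $m=3$ it only asserts membership in $Z(\beta_{1}^{6})=B_{3}$.) In the paper this configuration never arises in your form: when the block product is central one gets $\beta_{k}\in Z(\beta_{1}\cdots\beta_{k-1})$ for free, and the case analysis runs on $\beta_{1}\cdots\beta_{k-1}$ via the conjugate comparison above, never on powers of a single entry. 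As it stands, both the extension beyond $I$ and the central-Coxeter case are open in your outline, and these are the two places where the real content of the lemma lives.
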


\begin{proof}
With no loss of generality, we can assume that $e(\beta_{1})+e(\beta_{2})+\cdots + e(\beta_{k}) \not \equiv \pm 2, 3 \;\;(mod \: 6)$. 
First we show that $\beta_{k}$ commutes with $\beta_{k+1}$.
Let $C= \beta_{1}\beta_{2}\cdots \beta_{k-1}$.

Using the result of Eilenberg \cite{e} alluded to above and the hypothesis on the exponent sum, the partial Coxeter element $\beta_{1}\beta_{2}\cdots\beta_{k} = C \beta_{k}$ is non-periodic or central in $B_{3}$. Thus $\beta_{k}$ belongs to $Z(C)$. Similarly, by considering the partial Coxeter element of $\s \cdot \sigma_{k}^{2}$ we obtain that $\beta_{k}^{\beta_{k+1}}$ also belongs to $Z(C)$.

First we consider the case $C$ is periodic. Since we have assumed that $\beta_{k}$ is non-central, so $C$ is also non-central. This implies $Z(C)$ is an infinite cyclic group generated by an element having non-zero exponent sum. Thus we conclude $\beta_{k}=\beta_{k}^{\beta_{k+1}}$, so $\beta_{k}$ and $\beta_{k+1}$ commute.

If $C$ is pseudo-Anosov, then $\beta_{k}^{-1}\beta_{k}^{\beta_{k+1}}$ has the dilatation $1$ and zero exponent sum, hence $\beta_{k}^{-1}\beta_{k}^{\beta_{k+1}}=1$.

Finally, if $C$ is reducible, then $\beta_{k+1}$ and $C$ preserve the same essential submanifold because $\beta_{k}$ and $C$ preserve the same essential submanifold.
In $B_{3}$, this implies that $\beta_{k+1}$ also belongs to $Z(C)$. Thus, $\beta_{k}$ and $\beta_{k+1}$ commute. 

For each $i<k<j$, there exists a braid $\alpha \in B_{k}\times B_{n-k} \subset B_{n}$ such that $\s \cdot \alpha = (\beta'_{1},\ldots,\beta'_{k-1},\beta_{i},\beta_{j},\ldots,\beta'_{l} )$. so from the above argument, $\beta_{i}$ commutes with $\beta_{j}$. Therefore all entries of $\s$ commute. 

\end{proof}

This lemma imposes a strong restriction on the exponent sums (modulo 6) for non-commutative braid systems having finite Hurwitz orbit.

\begin{prop}
There are no irreducible braid systems with degree $3$, length $\geq 5$ having finite Hurwitz orbit.
\end{prop}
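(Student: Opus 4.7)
The plan is to combine the exponent-sum constraint of Lemma~\ref{lem:restriction} with a direct application of Theorem~\ref{thm:partialcox} to the full Coxeter element, handling a residual arithmetic exception by hand. First I would dispose of the case in which some $\beta_i$ is central in $B_3$: the partition $\{i\}\sqcup(\{1,\dots,n\}\setminus\{i\})$ then exhibits $\s$ as reducible. Henceforth I assume no $\beta_i$ is central.

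The main step is to search for a subset $I=\{i_1<\cdots<i_k\}\subseteq\{1,\dots,n\}$ with $1<k<n$ for which $e(\beta_{i_1})+\cdots+e(\beta_{i_k})\in\{0,1,5\}\pmod 6$; Lemma~\ref{lem:restriction} would then force all entries of $\s$ to commute, so $\s$ is reducible via any non-trivial partition. A case analysis of the multiset of residues mod $6$ gives: (i) if some $e(\beta_i)\equiv 0$ or $3\pmod 6$, a valid subset of size at most $4$ always exists; (ii) if residues from $\{1,2\}$ and $\{4,5\}$ are both present, any ``cross'' pair has sum in $\{0,1,5\}$; (iii) otherwise all residues lie in $\{1,2\}$ or all in $\{4,5\}$, and unless the multiset is constantly $1$ or constantly $5$, some size-$3$ or size-$4$ subset works. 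For $n\geq 6$ even the all-$1$ and all-$5$ cases are resolved by any $5$-element subset (with $k=5<n$ and sum $5$ or $25\equiv 1\pmod 6$). The sole remaining obstruction is therefore $n=5$ with $e(\beta_i)\equiv 1\pmod 6$ for every $i$ or $e(\beta_i)\equiv 5\pmod 6$ for every $i$.

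For this residual case I would analyse the full Coxeter element $C(\s)=\beta_1\cdots\beta_5$, whose exponent sum is $5$ or $25\equiv 1\pmod 6$; by Eilenberg's classification $C(\s)$ is not periodic, so it is pseudo-Anosov or reducible. If pseudo-Anosov, Theorem~\ref{thm:partialcox}(1) with $I=\{1,\dots,5\}$ gives mutual commutativity of the $\beta_i$ directly. If reducible, Theorem~\ref{thm:partialcox}(2) supplies a common essential simple closed curve $\mathcal{C}\subset D_3$ preserved by every $\beta_i$; up to isotopy $\mathcal{C}$ encloses exactly two of the three punctures, and the stabiliser $\mathrm{Stab}(\mathcal{C})\leq B_3$ coincides with the centraliser $Z(T_\mathcal{C})$ of the Dehn twist $T_\mathcal{C}=\sigma_1^2$ (since $\alpha T_\mathcal{C}\alpha^{-1}=T_{\alpha(\mathcal{C})}$). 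Writing $T_\mathcal{C}$ in normal form as $1\cdot(\sigma_1^2)_{(2,1)}$ and applying the structure theorem of~\cite{gw}, the interior factor of $Z(T_\mathcal{C})$ is $B_2$ and the consistent exterior factor is the pure subgroup $2\mathbb{Z}\subset B_2$ (the tube and the singleton have distinct weights and cannot be interchanged consistently); since the interior and exterior supports are disjoint, the extension is trivial and $\mathrm{Stab}(\mathcal{C})\cong\mathbb{Z}^2$ is abelian. Hence all $\beta_i$ commute and $\s$ is reducible.

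I expect the abelianness of $\mathrm{Stab}(\mathcal{C})$ in this final subcase to be the main technical hurdle: the rest of the argument is either routine modular arithmetic or a direct appeal to the already established lemmas, whereas pinning down the identification $\mathrm{Stab}(\mathcal{C})=Z(T_\mathcal{C})$ and tracking the consistent-exterior factor through the split exact sequence of~\cite{gw} requires careful bookkeeping with the standard curve system in $D_3$.
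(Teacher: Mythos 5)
Your proof is correct, and it actually takes a more careful route than the paper, whose own proof is two lines: for $l\geq 5$ it asserts that one can always find a subsequence with $1<k\leq l$ whose exponent sum is $\not\equiv \pm 2,3 \pmod 6$ and then invokes Lemma \ref{lem:restriction}. But Lemma \ref{lem:restriction} is stated (and proved --- its argument needs the entry $\beta_{k+1}$) only for proper subsequences $1<k<l$, and precisely in the residual case you isolated --- $n=5$ with all $e(\beta_i)\equiv 1$ or all $\equiv 5 \pmod 6$ --- every proper subset of size $k=2,3,4$ has sum $\equiv 2,3,4 \pmod 6$, so the paper's one-liner silently uses $k=l$, outside the lemma's hypotheses. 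Your patch supplies exactly the missing content: the exponent sum $\equiv \pm 1 \pmod 6$ rules out periodicity of the full Coxeter element (conjugates of $(\sigma_1\sigma_2)^m$ and $(\sigma_1\sigma_2\sigma_1)^m$ have $e\equiv 0,\pm 2 $ or $0,3 \pmod 6$), Theorem \ref{thm:partialcox}(1) with $I=\{1,\dots,5\}$ handles the pseudo-Anosov case, and in the reducible case the common invariant curve in $D_3$ forces all entries into its stabilizer $Z(\sigma_1^2)\cong\mathbb{Z}^2$. That last abelianness claim is sound and your bookkeeping through the split exact sequence of \cite{gw} is right, but you could shortcut it: it is the paper's own computation in Proposition \ref{prop:exactbound}(1) (the centralizer of a reducible $3$-braid is free abelian of rank two), and the same step is already used inside the proof of Lemma \ref{lem:restriction} (``in $B_3$, preserving the same essential submanifold as $C$ implies membership in $Z(C)$''). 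Your preliminary disposal of central entries is also genuinely needed, since Lemma \ref{lem:restriction} assumes all entries non-central, and the paper skips it as well; your mod-$6$ case analysis (good sums $\{0,\pm 1\}$, cross pairs from $\{1,2\}$ and $\{4,5\}$, triples and quadruples in the one-sided cases, five-element subsets once $n\geq 6$) checks out in every branch. In short: same strategy as the paper, executed so as to close a real gap in the published argument.
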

\begin{proof}
For a braid system $\s =(\beta_{1},\beta_{2},\cdots, \beta_{l})$, having the length $l \geq 5$, we can always find a sequence of integers $1 \leq i_{1} < i_{2} < \cdots < i_{k} \leq l$ $(1<k \leq l)$ such that $e(\beta_{i_{1}}) + \cdots + e(\beta_{i_{k}}) \neq \pm 2,3\;\;(mod\: 6)$. By Lemma \ref{lem:restriction}, this implies all entries of $\s$ commute, so $\s$ is reducible.
\end{proof}

This proves Theorem \ref{thm:3finite} {\it 3}.

\subsection{Degree $3$, length $3$ braid system}

Let $\s=(\beta_{1},\beta_{2},\beta_{3})$ be a length $3$, degree $3$ irreducible braid system having finite Hurwitz orbit. We denote the full Coxeter element $\beta_{1}\beta_{2}\beta_{3}$ by $C$.
As is described in Section 3, we consider the action of the rank two free group $F=F_{2}$ generated by $c_{1}= \sigma_{1}^{2}$ and $c_{2}= \sigma_{1}^{-1}\sigma_{2}^{2}\sigma_{1}$.

To treat degree $3$ braid systems, it is convenient to consider the quotient group $B_{3}' = B_{3}\slash \langle \Delta^{2} \rangle$ because the centralizer $Z(\beta)$ of a non-trivial element $[\beta] \in B_{3}'$ is a cyclic group.
For $\alpha, \beta \in B_{3}$, we denote by $\alpha \equiv \beta$ if $\alpha$ and $\beta $ defines the same elements in $B_{3}'$.

\subsubsection{Orbit graphs}

The Hurwitz orbit $\s \cdot F$ is described by an oriented, labeled graph $G$, which we call the {\it orbit graph} of $\s$.
The set of vertices of $G$ consists of the set of orbits $\s \cdot F$. Two vertices $\s$ and $\s'$ are connected by an edge oriented from $\s$ to $\s'$ labeled by $1$ (resp. $2$) if $\s \cdot c_{1} = \s'$ (resp. $\s \cdot c_{2} = \s'$). 
We will classify the orbit graphs of irreducible braid systems of the degree $3$ and the length $3$. 

A {\it simple vertex} of $G$ is defined as a vertex $\s$ such that $\s \cdot c_{i} = \s $ holds for some $i=1,2$. An {\it i-path} is an edge path of $G$ having the same label $i$ $(i=1,2)$. An {\it alternate path} is an edge-path whose labels alternate. We call a closed $i$-path of length $3$ a {\it triangle}. A triangle is {\it special} if all vertices of the triangle are non-simple.

First of all, we study the fundamental properties of orbit graphs.
\begin{lem}
\label{lem:forbidden}
Let $\s = (\beta_{1},\beta_{2},\beta_{3})$ be an irreducible braid system having finite Hurwitz orbit. Then the orbit graph $G$ of $\s$ has the following properties.

\begin{enumerate}
\item Every closed $i$-path in $G$ has the length at most $3$, and the length $2$ closed $i$-path and length $3$ closed $i$-path does not occur simultaneously. 
\item Every alternate path of length $12$ must be a loop.
\item There exist no subgraphs of the form $(F1) - (F4)$.
\end{enumerate}
\end{lem}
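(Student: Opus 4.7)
The plan is to establish each of the three assertions in turn. For (1), I apply Lemma~\ref{lem:calc}(1), which gives $\s \cdot c_1^k = (\beta_1^{(\beta_1\beta_2)^k}, \beta_2^{(\beta_1\beta_2)^k}, \beta_3)$, so the length of the closed $c_1$-path through a vertex equals the smallest $k > 0$ with $(\beta_1\beta_2)^k \in Z(\beta_1) \cap Z(\beta_2)$. Passing to $B_3' = B_3/\langle \Delta^2 \rangle \cong PSL_2(\mathbb{Z})$, two distinct maximal cyclic subgroups intersect trivially, so either $\beta_1, \beta_2$ commute (and then their commutator, having exponent sum zero, is trivial in $B_3$, giving length $1$) or $(\beta_1\beta_2)^k$ must be central in $B_3$, forcing $\beta_1\beta_2$ to be periodic. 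By the Eilenberg classification the period of a periodic $3$-braid is at most $3$, so the length is in $\{1, 2, 3\}$. The argument for $c_2$ is identical.

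For the non-coexistence of lengths $2$ and $3$: by Theorem~\ref{thm:partialcox} applied to the full Coxeter $C = \beta_1\beta_2\beta_3$, if $C$ is pseudo-Anosov or reducible then all three entries commute pairwise---in the reducible case because the stabilizer of a reducing curve in $B_3$ coincides with the centralizer of the corresponding Dehn twist, which by Proposition~\ref{prop:exactbound} is the abelian group $\mathbb{Z} \times \mathbb{Z}$. Hence a $c_1$-cycle of length $> 1$ forces $C$ to be periodic, so $C \equiv \Delta$ or $C \equiv (\sigma_1\sigma_2)^{\pm 1}$ in $B_3'$. A length-$2$ cycle requires $\beta_1\beta_2 \equiv \Delta$ and a length-$3$ cycle requires $\beta_1\beta_2 \equiv (\sigma_1\sigma_2)^{\pm 1}$; using $\beta_3 \equiv C(\beta_1\beta_2)^{-1}$ in $B_3'$ one checks case by case that each scenario either forces the third entry to be central (contradicting irreducibility of $\s$) or produces a third entry whose $B_3'$-conjugacy class (parabolic versus elliptic of order $3$) is inconsistent with the Hurwitz-invariant multiset of $B_3'$-conjugacy classes of entries across the two vertices.

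For (2), Lemma~\ref{lem:calc}(2) gives $\s \cdot (c_1 c_2)^k = (\beta_1^{C^k}, \beta_2^{(\beta_2\beta_3)^{-k} C^k}, \beta_3^{(\beta_2\beta_3)^{-k} C^k})$, so it suffices to show (a) $C^6 \in Z(\beta_i)$ for every $i$ and (b) $(\beta_2\beta_3)^6 \in Z(\beta_2) \cap Z(\beta_3)$. For (a), Theorem~\ref{thm:partialcox} applied to $C$ yields a trichotomy: if $C$ is periodic its period in $B_3$ is at most $3$ and $C^6$ is central; if $C$ is pseudo-Anosov the entries are pairwise commutative, so $C \in Z(\beta_i)$; if $C$ is reducible the entries all lie in the common abelian stabilizer of the reducing curve and again $C$ commutes with each $\beta_i$. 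For (b), the same trichotomy applied to the partial Coxeter $\beta_2\beta_3$ shows either that $\beta_2, \beta_3$ commute (so $(\beta_2\beta_3)^k$ centralizes both trivially) or that $\beta_2\beta_3$ is periodic of period at most $3$ (so $(\beta_2\beta_3)^6$ is central).

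For (3), the forbidden subgraphs $(F1)$--$(F4)$ are small local configurations mixing $c_1$- and $c_2$-edges. Each is ruled out by a direct finite computation: at each distinguished vertex of a hypothetical instance one writes down the action formulas from Lemma~\ref{lem:calc}, combines the resulting centralizer conditions with Theorem~\ref{thm:centerroot}, Proposition~\ref{prop:exactbound}, and the irreducibility of $\s$, and derives a contradiction. The main difficulty lies in the non-coexistence portion of (1): neither the exponent sum nor the multiset of $B_3'$-conjugacy classes of entries alone separates length-$2$ from length-$3$ cycles when $C \equiv (\sigma_1\sigma_2)^{\pm 1}$, so one must combine the invariance of the full Coxeter element $C$ itself with the Hurwitz dynamics. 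The analysis of $(F1)$--$(F4)$ in (3) is a similarly delicate finite case check whose exact form depends on the precise shape of the forbidden subgraphs.
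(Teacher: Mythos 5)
Your reduction of assertion (1) to the trichotomy on $\beta_1\beta_2$ is correct and essentially the paper's route (the paper cites Proposition \ref{prop:exactbound} for the reducible case, Theorem \ref{thm:centerroot} handling pseudo-Anosov, and Eilenberg's classification bounding the period of a periodic $3$-braid by $3$), and your treatment of assertion (2) is sound --- indeed more detailed than the paper's one-line appeal to Theorem \ref{thm:partialcox}(3), since you verify directly that the minimal return time of $(c_1c_2)^k$ divides $6$ rather than merely being at most $3!$. However, the non-coexistence half of (1) contains a genuine gap built on a false premise. You claim that ``neither the exponent sum nor the multiset of $B_3'$-conjugacy classes alone separates length-$2$ from length-$3$ cycles'' and must be supplemented by an unexecuted ``case by case'' check. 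This difficulty is spurious: the exponent sum does separate them, and the paper records exactly this in the remark immediately following the lemma. A closed $1$-path of length $2$ at some vertex forces $\beta_1\beta_2$ to be non-central periodic of period $2$, i.e.\ conjugate to an odd power of $\Delta$, so $e(\beta_1)+e(\beta_2)\equiv 3 \pmod 6$; a closed $1$-path of length $3$ forces period $3$, i.e.\ a conjugate of $(\sigma_1\sigma_2)^a$ with $3\nmid a$, so $e(\beta_1)+e(\beta_2)\equiv \pm 2 \pmod 6$. Since the orbit graph records the action of $F=\langle c_1,c_2\rangle \leq P_3$, which only conjugates each entry in place, $e(\beta_1)+e(\beta_2) \bmod 6$ is constant over all vertices of $G$, so the two congruences cannot hold simultaneously; the same argument with $e(\beta_1)+e(\beta_3)$ handles label $2$. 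Your substitute argument via conjugacy classes in $B_3'$ and the full Coxeter element is never actually carried out, so as written (1) is not proved.

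Assertion (3) is likewise only a method statement in your proposal. Admittedly the shapes $(F1)$--$(F4)$ live in a figure you could not see, but the concrete mechanism is simple and worth stating: each forbidden configuration yields a vertex $\s'$ fixed by a short mixed word, e.g.\ $(F1)$ gives $\s'\cdot(c_1c_2)=\s'$, and then Lemma \ref{lem:calc}(2) forces $C\in Z(\beta_1)$ and $(\beta_2\beta_3)^{-1}C \in Z(\beta_2)\cap Z(\beta_3)$, from which all three entries commute --- contradicting irreducibility (note that ``all entries pairwise commute'' is equivalent to the subgroup generated by the entries being abelian, which is a Hurwitz invariant, so the contradiction at $\s'$ passes back to $\s$). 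Your outline is compatible with this but supplies no actual argument, so both the non-coexistence part of (1) and all of (3) remain gaps.
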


\begin{proof}
The assertion {\it 1.} follows from Proposition \ref{prop:exactbound},
 and the assertion {\it 2.} follows from Theorem \ref{thm:partialcox} {\it 3.}
If there exists a subgraph of the form $(F1)$, then there exists a vertex $\s'= (\beta_{1},\beta_{2},\beta_{3})$ such that $\s' \cdot (c_{1}c_{2}) = \s'$ holds. However this implies $\beta_{1},\beta_{2}$ and $\beta_{3}$ commute, hence it contradicts the assumption that $\s'$ is irreducible. The non-existence of the other subgraphs $(F2)$, $(F3)$ and $(F4)$ are proved by the similar way.
\end{proof}
\begin{figure}[htbp]
 \begin{center}
\includegraphics[width=70mm]{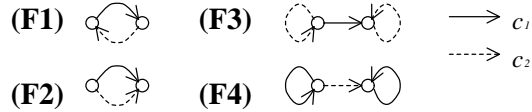}
 \end{center}
 \caption{Forbidden graphs}
 \label{fig:graph}
\end{figure}
We remark that the orbit graph $G$ has a closed $1$-path of the length $2$ (resp. of the length $3$) only if $e(\beta_{1})+e(\beta_{2}) \equiv 3 \:(mod\; 6)$ (resp. $e(\beta_{1})+e(\beta_{2}) \equiv \pm 2 \:(mod\; 6)$). Similarly, $G$ has a closed $2$-path of the length $2$ (resp. of the length $3$) only if $e(\beta_{1})+e(\beta_{3}) \equiv 3\:(mod\; 6)$ (resp. $e(\beta_{1})+e(\beta_{3}) \equiv \pm 2\:(mod\; 6)$).

To extract further restrictions of the orbit graph, we consider the exponent sums.
For an irreducible braid system having finite Hurwitz orbit, from Lemma \ref{lem:restriction}, all possibilities of the exponent sum modulo $6$ are the following. 

\[
(e(\beta_{1}),e(\beta_{2}),e(\beta_{3})) \equiv
\left\{
\begin{array}{l} 
(\pm 2, \pm 1, \pm 1)\hspace{1cm} \cdots (a)\\
(\pm 1, \pm 2, \pm 1)\hspace{1cm} \cdots (b)\\
(\pm 1, \pm 1, \pm 2)\hspace{1cm} \cdots (c)\\
(0, \pm 2, \pm 2), (\pm2, 0, \pm 2), \\
\;\;\;\;(\pm2, \pm 2, 0), (\pm1, \pm 1, \pm 1) \hspace{0.5cm} \cdots(d) 
\end{array}
\right.
\]

We call a braid system whose exponent sum is a pattern $(a)$ a {\it (2,2)-periodic system}.
Similarly, we call a braid system whose exponent sum is a pattern $(b)$, $(c)$ and $(d)$, {\it (2,3)-periodic system}, {\it (3,2)-periodic system}, and  {\it (3,3)-periodic system} respectively.
Now we study each case separately.

\subsubsection{$(2,2)$-periodic systems}

\begin{lem}
\label{lem:22per}
Let $\s$ be a $(2,2)$-periodic system having finite Hurwitz orbit. Then $\s' \cdot (c_{1}c_{2})^{3} = \s'$ holds for all $\s' \in \s \cdot F$. That is, every alternate path of length $6$ must be a loop.
\end{lem}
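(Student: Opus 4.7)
The plan is to rewrite the condition $\s' \cdot (c_1 c_2)^3 = \s'$ in centralizer form via Lemma \ref{lem:calc}, and then verify it by a Nielsen--Thurston case analysis on the (invariant) full Coxeter element $C = \beta'_1 \beta'_2 \beta'_3$ and on the partial Coxeter element $B = \beta'_2 \beta'_3$. The whole argument pivots on the fact that the $(2,2)$-exponent pattern forces every periodic Coxeter arising here to be of $\delta$-type, so that its cube is central.

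Observing that $(\beta'_1)^{-1}C = B$, Lemma \ref{lem:calc}(2) (which extends verbatim to $j=2$ by direct computation) gives
\[
\s' \cdot (c_1 c_2)^k = \bigl( (\beta'_1)^{C^k},\, (\beta'_2)^{B^{-k}C^k},\, (\beta'_3)^{B^{-k}C^k}\bigr),
\]
so the target reduces to the pair of conditions $C^3 \in Z(\beta'_1)$ and $B^{-3}C^3 \in Z(\beta'_2)\cap Z(\beta'_3)$. Because $C$ is Hurwitz-invariant and $F \subset P_3$ acts on each entry by individual conjugation, every $\s' \in \s \cdot F$ is still $(2,2)$-periodic with the same $C$; a direct enumeration gives $e(C) \equiv \pm 2 \pm 1 \pm 1 \in \{0, \pm 2, \pm 4\}$ and $e(B) \equiv \pm 1 \pm 1 \in \{0, \pm 2\}$ modulo $6$, both even.

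Next I would split on the Nielsen--Thurston type of $C$. If $C$ is pseudo-Anosov or reducible, Lemma \ref{lem:calc}(3) with $(i,j)=(1,3)$ together with finiteness yields $\beta'_i \in Z(C^p)$ for some $p>0$; Theorem \ref{thm:centerroot}(2) in the pseudo-Anosov case and Proposition \ref{prop:exactbound}(1) in the reducible case refine this to $\beta'_i \in Z(C)$. Since $Z(C)$ is free abelian of rank two in both situations, the three entries of $\s'$ mutually commute, so $c_1, c_2$ act trivially and the claim is immediate. If $C$ is periodic, evenness of $e(C) \pmod 6$ rules out $C \sim \Delta^{\text{odd}}$, leaving $C \sim \delta^m$; hence $C^3$ is central, which makes the first centralizer condition automatic and reduces the second to $B^3 \in Z(\beta'_2)\cap Z(\beta'_3)$.

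For this residual condition I would run the parallel analysis on $B$. If $B$ is pseudo-Anosov or reducible, Lemma \ref{lem:calc}(3) with $(i,j)=(2,3)$ and finiteness give $\beta'_2, \beta'_3 \in Z(B^p)$; the same refinement via Theorem \ref{thm:centerroot}(2) or Proposition \ref{prop:exactbound}(1) forces $\beta'_2, \beta'_3 \in Z(B)$, and since $Z(B)$ is abelian the two entries commute, so $B^3$ commutes with each. If $B$ is periodic, evenness of $e(B) \pmod 6$ again forces $B \sim \delta^{m'}$ and $B^3$ is central. The main obstacle I anticipate is simply the exponent-sum bookkeeping needed to exclude $\Delta^{\text{odd}}$-type periodic Coxeter elements; once that is in place, the Nielsen--Thurston split is clean and Proposition \ref{prop:exactbound}(1) does the heavy lifting in the reducible sub-cases.
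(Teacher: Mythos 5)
Your proposal is correct and takes essentially the same route as the paper's own (much terser) proof: the paper likewise uses the exponent-sum pattern to see that the full Coxeter element $C$ and the partial Coxeter element $\beta_{2}\beta_{3}$ are $\delta$-type periodic, so that $C^{3}$ is central and $\beta_{2},\beta_{3} \in Z((\beta_{2}\beta_{3})^{3})$, and then concludes by the Lemma \ref{lem:calc} formula at $j=2$. Your explicit Nielsen--Thurston trichotomy on $C$ and $B$, with Theorem \ref{thm:centerroot} and Proposition \ref{prop:exactbound} handling the pseudo-Anosov and reducible cases, merely spells out what the paper leaves implicit through its standing irreducibility assumption (there those cases would force all entries to commute, contradicting irreducibility, whereas you note the conclusion then holds trivially).
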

\begin{proof}
Let $\s'=(\beta_{1},\beta_{2},\beta_{3})$. Then its Coxeter element is a periodic braid with period $3$ and $\beta_{2},\beta_{3} \in Z((\beta_{2}\beta_{3})^{3})$. Therefore by Lemma \ref{lem:calc} $\s \cdot (c_{1}c_{2})^{3} = \s$.
\end{proof}

\begin{prop}
For a $(2,2)$-periodic system $\s$ having finite Hurwitz orbit, the orbit graph $G$ is either $(A)$ or $(B)$ in the Figure \ref{fig:22graph}. Both $(A)$ and $(B)$ are realized as the orbit graph of a braid system.
\end{prop}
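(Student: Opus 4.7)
The plan is to show that the Hurwitz orbit carries a natural $D_{3}$-action, enumerate the transitive $D_{3}$-sets compatible with the forbidden subgraphs, and then exhibit explicit braid systems realizing each possibility.

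By Lemma \ref{lem:restriction}, the exponent-sum pattern of an irreducible $(2,2)$-periodic system must be (up to overall sign) $(e(\beta_{1}),e(\beta_{2}),e(\beta_{3})) \equiv (2,1,1) \pmod{6}$. Since $c_{1},c_{2}$ are pure braids, the Hurwitz action preserves which entry occupies which slot (up to conjugation), hence preserves the exponent sum at each slot. Consequently at every vertex $v=(\gamma_{1},\gamma_{2},\gamma_{3})$ of the orbit one has $(e(\gamma_{1}),e(\gamma_{2}),e(\gamma_{3})) \equiv (2,1,1) \pmod{6}$, and in particular $e(\gamma_{1})+e(\gamma_{2}) \equiv e(\gamma_{1})+e(\gamma_{3}) \equiv 3 \pmod{6}$. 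The remark after Lemma \ref{lem:forbidden} then excludes closed $c_{i}$-paths of length $3$, so every $c_{i}$-cycle has length $1$ or $2$; equivalently, $c_{1}^{2}$ and $c_{2}^{2}$ act trivially on the orbit. Combined with Lemma \ref{lem:22per}, which says $(c_{1}c_{2})^{3}$ acts trivially, the action of $F$ factors through the dihedral group $D_{3} = \langle c_{1},c_{2} \mid c_{1}^{2}=c_{2}^{2}=(c_{1}c_{2})^{3}=1 \rangle$ of order $6$.

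Transitive $D_{3}$-sets have sizes $1$, $2$, $3$, or $6$. The size-$1$ action forces all $\beta_{i}$ to commute, contradicting irreducibility. The size-$2$ action has $c_{1}c_{2}$ acting trivially, so every vertex $\s'$ satisfies $\s'\cdot(c_{1}c_{2})=\s'$, which is exactly the forbidden subgraph $(F1)$. The size-$3$ action (on the cosets of $\langle c_{i}\rangle$) has one vertex simple for $c_{1}$, one simple for $c_{2}$, and one non-simple for both; I would identify this with graph $(A)$. The regular size-$6$ action gives the Cayley graph of $D_{3}$, which I would identify with graph $(B)$. A direct inspection verifies that neither $(A)$ nor $(B)$ contains any of the forbidden subgraphs $(F2)$--$(F4)$.

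Finally, to realize $(A)$ I would choose a braid system in which some partial Coxeter element is central, forcing a simple vertex---for instance a suitable conjugate of $(\sigma_{1}^{2}, \sigma_{1}, \sigma_{2})$ or a similar system matching the pattern $(2,1,1)$; for $(B)$ I would take a generic irreducible $(2,2)$-periodic system with no simple vertex, and compute the orbit directly with Lemma \ref{lem:calc}. The main obstacle lies in these explicit constructions: one must independently verify irreducibility of the candidate systems and confirm that their orbits have exactly the predicted size $3$ or $6$. The combinatorial enumeration itself is clean because the slot-invariance of exponent sums (due to purity of $c_{1}$, $c_{2}$) cuts off the length-$3$ cycles uniformly, which is the step that collapses the a priori complicated orbit into a $D_{3}$-set.
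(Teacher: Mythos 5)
Your classification half is correct and, once unwound, rests on exactly the same three ingredients as the paper's proof: the remark following Lemma \ref{lem:forbidden} (no length-$3$ closed $i$-paths, since $e(\beta_{1})+e(\beta_{2})\equiv e(\beta_{1})+e(\beta_{3})\equiv 3 \pmod 6$ once one checks that pattern $(a)$ forces $(2,1,1)$ up to overall sign), Lemma \ref{lem:forbidden} \emph{1.}, and Lemma \ref{lem:22per}. The paper then simply splits into the cases ``$G$ has a simple vertex'' (graph $(A)$) and ``$G$ has none'' (graph $(B)$); your repackaging --- the $F$-action factors through $D_{3}=\langle c_{1},c_{2} \mid c_{1}^{2}=c_{2}^{2}=(c_{1}c_{2})^{3}=1\rangle$, so the orbit is a transitive $D_{3}$-set of size $1$, $2$, $3$ or $6$, with sizes $1$ and $2$ excluded by irreducibility and by $(F1)$ --- is a cleaner organization of the same argument rather than a new one, and your observation that purity of $c_{1},c_{2}$ makes the exponent sums slot-invariant is a genuine improvement in rigor: it is precisely what licenses applying the exponent-sum remark at \emph{every} vertex of the orbit, a point the paper passes over silently when it asserts that the orbit graphs of $(2,2)$-periodic systems have no triangles. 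Two blemishes. First, in the size-$1$ case a common fixed vertex of $c_{1}$ and $c_{2}$ yields only that $\beta_{1}$ commutes with $\beta_{2}$ and with $\beta_{3}$, not that all entries pairwise commute; this still contradicts irreducibility (take $I=\{1\}$, $J=\{2,3\}$ in the definition of reducible systems), so your conclusion survives but the stated reason is slightly off. Second, the realization half is left unfinished: you recover the paper's example $(\sigma_{1}^{2},\sigma_{1},\sigma_{2})$ for $(A)$ (though note its simple vertex comes from $\beta_{1}$ commuting with $\beta_{2}$, not from a central partial Coxeter element), but for $(B)$ you offer only a search strategy, whereas the paper exhibits the concrete system $(\sigma_{1}\sigma_{2},\sigma_{1},\sigma_{2})$; since the proposition explicitly asserts that both graphs are realized, producing and verifying such an example is a required (if finite and routine) computation, not an optional afterthought.
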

\begin{proof}
The orbit graphs of $(2,2)$-periodic systems have no triangles.
By Lemma \ref{lem:forbidden} and \ref{lem:22per}, if there are simple vertices in $G$, we obtain the graph $(A)$. 
Similarly, if there are no simple vertices in $G$, then by Lemma \ref{lem:forbidden} and \ref{lem:22per}, we obtain the graph $(B)$.
The graph $(A)$ appears as the orbit graph of the braid system $(\sigma_{1}^{2},\sigma_{1},\sigma_{2})$, and the graph $(B)$ appears as the orbit graph of the braid system $(\sigma_{1}\sigma_{2},\sigma_{1},\sigma_{2})$.
\end{proof}
\begin{figure}[htbp]
 \begin{center}
\includegraphics[width=70mm]{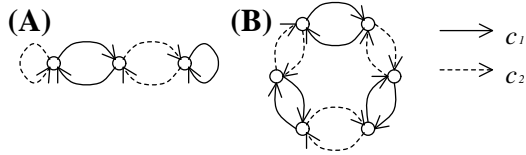}
 \end{center}
 \caption{Orbit graphs of $(2,2)$-periodic systems}
 \label{fig:22graph}
\end{figure}

\subsubsection{$(2,3)$- and $(3,2)$- periodic systems}

Next we consider $(2,3)$- and $(3,2)$-periodic systems.
For simplicity, we consider $(2,3)$-periodic systems. The orbit graphs of $(3,2)$-periodic systems are the same except that the role of $c_{1}$ and $c_{2}$ are interchanged. 

\begin{lem}
\label{lem:23res}
Let $\s=(\beta_{1},\beta_{2},\beta_{3})$ be an irreducible $(2,3)$-periodic system having finite Hurwitz orbit. Then
\begin{enumerate}
\item $\s \cdot (c_{1}c_{2})^{2} \neq \s$.
\item $\s \cdot (c_{1}c_{2})^{2}$ is a simple vertex if and only if $\s$ is a simple vertex.
\end{enumerate} 
\end{lem}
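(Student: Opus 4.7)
The strategy is to use the explicit formula for $(c_1c_2)^k$ in Lemma~\ref{lem:calc}(2) together with the centralizer structure of non-periodic $3$-braids, exploiting the strong periodicity constraints that the $(2,3)$-periodic exponent pattern imposes on $C := \beta_1\beta_2\beta_3$ and $D := \beta_2\beta_3$. As setup: since $\s$ is irreducible, Theorem~\ref{thm:partialcox} forces $C$ to be periodic, and the exponent constraint $e(C) \equiv 0$ or $\pm 2 \pmod 6$ leaves only period $1$ or $3$. A preliminary analysis rules out period $1$---if $C$ were central then $e(D) \equiv \pm 1 \pmod 6$ would make $D$ non-periodic, and combining $Z(D^r) = Z(D)$ with the formula forces $\beta_2, \beta_3$ to commute and ultimately $\s$ to be reducible---so $C$ has period $3$ and $C^3 \in Z(B_3)$. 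Moreover, $e(\beta_1), e(\beta_3) \equiv \pm 1 \pmod 6$ implies $\beta_1, \beta_3$ are non-periodic, so $Z(\beta_i^s) = Z(\beta_i)$ for $i \in \{1, 3\}$ and $s \geq 1$ by Theorem~\ref{thm:centerroot}(2) or Proposition~\ref{prop:exactbound}(1).

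For part 1, the formula gives
\[
\s \cdot (c_1c_2)^2 = \bigl(\beta_1^{C^2},\; \beta_2^{D^{-2}C^2},\; \beta_3^{D^{-2}C^2}\bigr).
\]
Setting this equal to $\s$ forces $\beta_1 \in Z(C^2) = Z(C)$ (using $C^3$ central), so $\beta_1$ commutes with $D = \beta_1^{-1}C$. Substituting $\beta_1 D = D\beta_1$ reduces $D^{-2}C^2 = D^{-2}(\beta_1 D)^2 = \beta_1^2$, and the remaining two coordinates give $\beta_2, \beta_3 \in Z(\beta_1^2) = Z(\beta_1)$. Thus $\beta_1$ commutes with both $\beta_2$ and $\beta_3$, reducing $\s$ via the partition $\{1\}\sqcup\{2,3\}$---contradicting irreducibility.

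For part 2, conjugating the commutation conditions by $C^{-i}$ shows $\s_i := \s \cdot (c_1c_2)^i$ is simple if and only if $\beta_1$ commutes with $\beta_2^{D^{-i}}$ or with $\beta_3^{D^{-i}}$, so the claim reduces to showing $\beta_j^{D^{-2}} = \beta_j$ for $j = 2, 3$. I prove this in two cases: (A) if $\beta_2, \beta_3$ commute, then $D \in Z(\beta_2) \cap Z(\beta_3)$, so conjugation by any power of $D$ fixes $\beta_2, \beta_3$; (B) if $\beta_2, \beta_3$ do not commute, an analysis of $\s \cdot (c_1c_2)^r = \s$ using the formula, together with $Z(C^r) = Z(C)$ when $r \not\equiv 0 \pmod 3$ (and the observation that $\beta_1 \in Z(C)$ would force $\s$ reducible exactly as in part 1), shows $r$ must be a multiple of $3$ and $D$ must be periodic; the exponent constraint $e(D) \in \{\pm 1, \pm 3\} \pmod 6$ together with the fact that periodic $3$-braids have exponent sum in $\{0, \pm 2, \pm 3\} \pmod 6$ forces $e(D) \equiv \pm 3 \pmod 6$ and hence $D$ has period $2$, making $D^2$ central and $\beta_j^{D^{-2}} = \beta_j$ trivially.

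The main obstacle is case (B) of part 2: one must show that whenever $\beta_2, \beta_3$ do not commute, the finite-orbit condition combined with irreducibility forces $D$ to be periodic (necessarily of period $2$). This requires carefully combining the exponent constraints from $(2,3)$-periodicity with the partial-Coxeter element argument; ruling out non-periodic $D$ in case (B) is the crucial step from which the centrality of $D^2$ and hence part 2 follow.
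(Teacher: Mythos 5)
Your proof is correct and follows essentially the same route as the paper: both rest on the formula of Lemma \ref{lem:calc}(2) together with the two facts that $C$ is periodic of period $3$ and that $\beta_{2},\beta_{3} \in Z((\beta_{2}\beta_{3})^{2})$, so that $\s \cdot (c_{1}c_{2})^{2}$ becomes a simultaneous conjugate of $\s$ by $C^{2}$, from which both assertions follow. The only differences are bookkeeping: the paper gets $\beta_{2},\beta_{3} \in Z((\beta_{2}\beta_{3})^{2})$ directly from finiteness along the $\sigma_{2}^{2p}$-direction plus the exponent constraint $e(\beta_{2})+e(\beta_{3}) \equiv 3 \pmod{6}$, whereas your case (B) reaches the same fact by a longer but valid detour through the $(c_{1}c_{2})^{r}$-orbit, and your part 1 concludes via $\beta_{2},\beta_{3} \in Z(\beta_{1})$ instead of via the cyclicity of $Z(C^{2})$.
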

\begin{proof}
Since $\s'$ is a $(2,3)$-periodic system, its Coxeter element $C$ is periodic with period $3$ and $\beta_{2},\beta_{3} \in Z((\beta_{2}\beta_{3})^{2})$.
Thus, $\s \cdot (c_{1}c_{2})^{2} = (\beta_{1}^{C^{2}},\beta_{2}^{C^{2}},\beta_{3}^{C^{2}})$. So $\s \cdot (c_{1}c_{2})^{2}$ is a simple vertex if and only if $\s$ is a simple vertex. If $\s = \s \cdot (c_{1}c_{2})^{2}$, then $\beta_{1}$, $\beta_{2}$ and $\beta_{3}$ commute, hence it contradicts the assumption that $\s$ is irreducible. 
\end{proof}

\begin{prop}
\label{prop:23-graph}
If $\s$ is a $(2,3)$-periodic system having finite Hurwitz orbit, then the orbit graph $G$ is either $(C)$ or $(D)$ in Figure \ref{fig:23graph}. Both $(C)$ and $(D)$ are realized as an orbit graph.
\end{prop}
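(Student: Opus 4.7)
The plan is to mimic the preceding proof for $(2,2)$-periodic systems, passing from local constraints on $G$ (allowed closed $c_i$-path lengths and forbidden subgraphs) to a global classification. First I would use Lemma \ref{lem:restriction} applied to each length-two sub-sum to pin down the exponent-sum pattern of an irreducible $(2,3)$-periodic system with finite Hurwitz orbit: it must satisfy $e(\beta_1)+e(\beta_2)\equiv e(\beta_2)+e(\beta_3)\equiv 3$ and $e(\beta_1)+e(\beta_3)\equiv \pm 2\pmod 6$. By the remark following Lemma \ref{lem:forbidden}, every closed $1$-path in $G$ therefore has length at most $2$ (no triangle labelled $1$), and every closed $2$-path has length $1$ or $3$ (no length-two closed $2$-paths, but triangles labelled $2$ are allowed).

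Next, I would combine Lemma \ref{lem:23res} with Theorem \ref{thm:partialcox}(3) to control the alternating orbits. Lemma \ref{lem:23res} gives $\s\cdot (c_1c_2)^2\ne \s$ at every vertex together with the $(c_1c_2)^2$-invariance of simpleness; Theorem \ref{thm:partialcox}(3) produces some $r\le m!=6$ with $\s\cdot(c_1c_2)^r=\s$. Hence the minimum alternating return $r_0$ lies in $\{3,4,5,6\}$. A closer inspection of the formula in Lemma \ref{lem:calc}(2), using the fact that the full Coxeter element $C=\beta_1\beta_2\beta_3$ has exponent sum $\equiv \pm 2\pmod 6$ and therefore period exactly $3$ whenever it is periodic, should narrow $r_0$ to $3$ or $6$ and so yield the backbone of $G$ (hexagonal or dodecagonal alternating cycles).

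With this skeleton in hand I would split into two cases according to whether $G$ contains a simple vertex, and in each case glue the local pieces (length-$\le 2$ closed $1$-paths, triangles with label $2$, forbidden subgraphs $(F1)$--$(F4)$, and $(c_1c_2)^2$-invariance of simpleness) to recover a unique graph: $(C)$ when simple vertices exist and $(D)$ otherwise. Finally I would exhibit explicit realisations of $(C)$ and $(D)$ by braid systems whose exponent-sum pattern is $(\pm 1,\pm 2,\pm 1)$, modelled on the $(2,2)$-period examples $(\sigma_1^2,\sigma_1,\sigma_2)$ and $(\sigma_1\sigma_2,\sigma_1,\sigma_2)$, and verify their orbit graphs by direct computation. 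The main obstacle is the gluing step: the local constraints a priori admit several global configurations, so one must carefully enumerate how the $c_2$-triangles and $c_1$-bigons can be attached around both simple and non-simple vertices without producing any of the forbidden subgraphs, thereby ruling out every configuration except $(C)$ and $(D)$.
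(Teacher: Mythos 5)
Your reductions up to the backbone are essentially correct: the exponent-sum pattern forces $c_1$-closed paths of length $1$ or $2$ and $c_2$-closed paths of length $1$ or $3$, and since $\s\cdot(c_1c_2)^2=\s^{C^2}$ with $C^3$ central, the stabilizer argument together with Lemma \ref{lem:23res} (1) and the forbidden graph $(F1)$ does confine the minimal alternating return to $\{3,6\}$. The genuine gap is exactly the step you defer as ``the main obstacle'': the purely combinatorial gluing does not close up, and the paper does not attempt it in both cases. The paper splits on whether $G$ contains a \emph{special} triangle (a closed $2$-path of length $3$ all of whose vertices are non-simple), and in that case it abandons graph combinatorics for algebra: at a vertex $(\beta_1,\beta_2,\beta_3)$ of a special triangle, non-simpleness at the three vertices of the triangle makes the three braids $\beta_2\beta_1$, $\beta_2^{(\beta_1^{-1}\beta_3^{-1})}\beta_1$ and $\beta_2^{(\beta_1^{-1}\beta_3^{-1})^2}\beta_1$ periodic, and then Lemma \ref{lem:key} --- the normal-form lemma for periodic $3$-braids, which your proposal never invokes --- forces $\beta_2\in Z(\beta_3\beta_1)$ (the alternative $\beta_1\in Z(\beta_3\beta_1)$ is excluded by irreducibility). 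Since $\beta_3\beta_1$ is periodic of period $3$, after conjugation $\beta_2\equiv\beta_3\beta_1\equiv\delta^{\pm1}$, so the system is $(\beta_1,\delta^{\pm1},\delta^{\pm1}\beta_1^{-1})$ and its orbit graph is then \emph{computed directly} to be $(C)$. Without this algebraic identification, the local constraints you list (path lengths, $(F1)$--$(F4)$, $\s\cdot(c_1c_2)^2\neq\s$, invariance of simpleness, length-$12$ alternating loops) leave several a priori ways to attach a special $c_2$-triangle to $c_1$-bigons along a hexagonal or dodecagonal alternating cycle; your plan gives no mechanism to exclude them, and the structure of the paper (Lemma \ref{lem:key} is billed as the key lemma and is reused for the $(3,3)$ and length-$4$ cases for the same purpose) indicates that no such purely combinatorial exclusion was available.

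Only the no-special-triangle half of your plan matches the paper: there Lemma \ref{lem:forbidden} and Lemma \ref{lem:23res} do suffice and yield $(D)$. Two smaller points: your realizations are left to ``direct computation'' and your suggested models $(\sigma_1^2,\sigma_1,\sigma_2)$ and $(\sigma_1\sigma_2,\sigma_1,\sigma_2)$ are the $(2,2)$-periodic examples, whose exponent pattern is wrong for this case --- the paper realizes $(C)$ by $(\sigma_2,\sigma_1\sigma_2,\sigma_1)$ and $(D)$ by $(\sigma_1,\sigma_1^2,\sigma_2)$; and note that in the special-triangle case the realization of $(C)$ is not a separate verification but falls out of the normal form $(\beta_1,\delta^{\pm1},\delta^{\pm1}\beta_1^{-1})$, which is the payoff of the algebraic route you are missing.
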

\begin{proof}
First we consider the case that $G$ has a special triangle.
Let $(\beta_{1},\beta_{2},\beta_{3})$ be a vertex of a special triangle. Then, $\beta_{2}\beta_{1}$, $\beta_{2}^{(\beta_{1}^{-1} \beta_{3}^{-1})}\beta_{1}$ and $\beta_{2}^{ (\beta_{1}^{-1} \beta_{3}^{-1})^{2}}\beta_{1}$ are periodic.
From Lemma \ref{lem:key}, this implies that either $\beta_{2}$ or $\beta_{1}$ belongs to $Z(\beta_{3}\beta_{1})$. Since $\beta_{1}$ and $\beta_{3}$ do not commute, we conclude that $\beta_{2}$ belongs to $Z(\beta_{3}\beta_{1})$. 

Since $\beta_{3}\beta_{1}$ is a periodic braid with period $3$, by taking an conjugation of the braid system, we may assume that $\beta_{2} \equiv \beta_{3}\beta_{1} \equiv \delta^{\pm 1}$. Then the orbit graph of the braid system $(\beta_{1},\delta^{\pm 1},\delta^{\pm 1} \beta_{1}^{-1})$ is the graph $(C)$. The graph $(C)$ is realized as the orbit graph of the braid system $(\sigma_{2},\sigma_{1}\sigma_{2},\sigma_{1})$.
 
 Next we assume that $G$ has no special triangles. Then by Lemma \ref{lem:forbidden} and \ref{lem:23res}, the graph must be the form $(D)$. The graph $(D)$ is realized as the orbit graph of the braid system $(\sigma_{1},\sigma_{1}^{2},\sigma_{2})$.
\end{proof}
\begin{figure}[htbp]
 \begin{center}
\includegraphics[width=80mm]{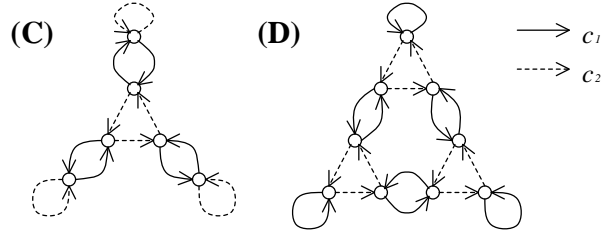}
 \end{center}
 \caption{Orbit graphs of $(2,3)$-periodic systems}
 \label{fig:23graph}
\end{figure}

\subsubsection{$(3,3)$-periodic systems} 

Finally, we consider the orbit graph of $(3,3)$-periodic systems.

\begin{lem}
\label{lem:33res}
Let $\s = (\beta_{1},\beta_{2},\beta_{3})$ be a $(3,3)$-periodic system having finite Hurwitz orbit.
\begin{enumerate}
\item $(e(\beta_{1}),e(\beta_{2}),e(\beta_{3})) \equiv (\pm 1,\pm1,\pm1) \;( mod \;6)$. 
\item $\s \cdot (c_{1}c_{2})^{3}$ is a simple vertex if and only if $\s$ is a simple vertex.
\end{enumerate}
\end{lem}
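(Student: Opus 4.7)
For part 1, I argue by contradiction. Suppose the exponent sum pattern lies in type $(d)$ but is not $(\pm 1, \pm 1, \pm 1)$. Since the Hurwitz action permutes the multiset of exponent sums, I may relabel so that $e(\beta_{1}) \equiv 0 \pmod{6}$; Lemma~\ref{lem:restriction} applied to $\{\beta_{2}, \beta_{3}\}$ forces $e(\beta_{2}) \equiv e(\beta_{3}) \equiv \pm 2 \pmod{6}$ with a common sign. Since $\s$ is irreducible, $\beta_{1}$ is non-central, and a periodic $3$-braid with $e \equiv 0 \pmod{6}$ is central, so $\beta_{1}$ is non-periodic. Two tools drive the argument: \textbf{(i)} a \emph{sub-pair reduction}, where the sub-$B_{2}$ orbit on $(\beta_{i}, \beta_{j})$ (finite since $\s \cdot B_{3}$ is) together with Lemma~\ref{lem:calc}(3) and Proposition~\ref{prop:exactbound} yields the dichotomy ``$\beta_{i}\beta_{j}$ is periodic, or $\beta_{i}$ and $\beta_{j}$ commute''; and \textbf{(ii)} Theorem~\ref{thm:partialcox} applied to $C = \beta_{1}\beta_{2}\beta_{3}$, which shows $C$ cannot be pseudo-Anosov nor reducible (in $B_{3}$ the stabilizer of an essential curve is abelian, so in either case all three $\beta_{i}$ commute, contradicting irreducibility); hence $C$ is periodic with $\bar C$ of order $3$ in $B_{3}'$.

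Set $\alpha = \beta_{2}\beta_{3}$. The orbit element $\s \cdot \sigma_{2}^{-2k} = (\beta_{1}, \beta_{2}^{\alpha^{-k}}, \beta_{3}^{\alpha^{-k}})$ from Lemma~\ref{lem:calc}(3) has first partial Coxeter element $\beta_{1}\beta_{2}^{\alpha^{-k}}$, which is conjugate to $\beta_{1}^{\alpha^{k}}\beta_{2}$. When $\alpha$ is periodic (hence of period $3$), Lemma~\ref{lem:key} applied with $\beta = \beta_{1}, \gamma = \beta_{2}$ asserts that these three partial Coxeter elements ($k = 0, 1, 2$) cannot all be periodic unless $\beta_{1} \in Z(\alpha)$ or $\beta_{2} \in Z(\alpha)$. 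Since $Z(\bar\alpha) \subset B_{3}'$ is cyclic of order $3$, membership of $\bar\beta_{1}$ directly contradicts $\beta_{1}$ being non-periodic, while membership of $\bar\beta_{2}$ forces $\bar\beta_{3} = \bar\alpha\bar\beta_{2}^{-1} \in \langle\bar\alpha\rangle$ as well, so $\beta_{2}, \beta_{3}$ commute and we fall into the non-periodic branch below. Otherwise some $\beta_{1}\beta_{2}^{\alpha^{-k}}$ is non-periodic; sub-pair reduction (i) forces $\beta_{1}$ to commute with $\beta_{2}^{\alpha^{-k}}$, so $\bar\beta_{1}$ lies in the cyclic subgroup $Z(\bar\beta_{2}^{\alpha^{-k}})$, and chasing the $B_{3}'$ identity $\bar C = \bar\beta_{1}\bar\alpha$ (with $\bar C, \bar\alpha$ both of finite order) pins $\bar\beta_{1}$ inside a finite-order cyclic subgroup---a contradiction. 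When $\alpha$ is non-periodic, (i) gives $\beta_{2}, \beta_{3}$ commuting; irreducibility then forces $\beta_{1}$ not to commute with at least one of them, so $\beta_{1}\beta_{2}$ and $\beta_{1}\beta_{3}$ are both periodic by (i), yielding $\bar\beta_{2} = \bar\beta_{3}$ in $B_{3}'$; we then swap the role of $\alpha$ with $\beta_{1}\beta_{2}$ and rerun the Lemma~\ref{lem:key} argument on the orbit elements $\s \cdot \sigma_{1}^{2k}$ to reach the analogous contradiction.

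For part 2, Lemma~\ref{lem:calc}(2) gives $\s' := \s \cdot (c_{1}c_{2})^{3} = (\beta_{1}^{C^{3}}, \beta_{2}^{D}, \beta_{3}^{D})$ with $D = (\beta_{2}\beta_{3})^{-3}C^{3}$, and Lemma~\ref{lem:calc}(1) shows directly that $\s \cdot c_{1} = \s$ iff $\beta_{1}, \beta_{2}$ commute, while $\s \cdot c_{2} = \s$ iff $\beta_{1}, \beta_{3}$ commute. The analogous condition $\s' \cdot c_{1} = \s'$ unpacks to $\beta_{1}^{C^{3}}$ commuting with $\beta_{2}^{D}$; conjugating both by $D^{-1}$ and noting $C^{3}D^{-1} = (\beta_{2}\beta_{3})^{3}$, this is equivalent to $\beta_{1}^{(\beta_{2}\beta_{3})^{3}}$ commuting with $\beta_{2}$. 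Under the $(\pm 1, \pm 1, \pm 1)$ pattern from part 1, $e(\beta_{2}\beta_{3}) \in \{0, \pm 2\} \pmod{6}$, so either $\beta_{2}\beta_{3}$ is periodic---in which case $\bar{\beta_{2}\beta_{3}}$ has order dividing $3$ in $B_{3}'$, making $(\beta_{2}\beta_{3})^{3}$ central and collapsing the condition to ``$\beta_{1}, \beta_{2}$ commute''---or $\beta_{2}\beta_{3}$ is non-periodic, in which case (i) gives $\beta_{2}, \beta_{3}$ commuting, so $(\beta_{2}\beta_{3})^{3} \in Z(\beta_{2})$, and a further conjugation reduces the condition to the same. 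The identical argument for $c_{2}$ yields $\s' \cdot c_{2} = \s'$ iff $\beta_{1}, \beta_{3}$ commute, proving $\s'$ is simple iff $\s$ is. The main obstacle is the $B_{3}'$ bookkeeping in part 1: orchestrating Lemma~\ref{lem:key} together with the cyclic centralizer property to eliminate all configurations---including the delicate ones where some $\bar\beta_{i}$ is hyperbolic but might a priori share enough structure with $\bar C$ to evade the immediate contradiction---is the most technical step.
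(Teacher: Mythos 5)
Part 1 of your proposal contains a genuine gap at its decisive step. In the main branch you deduce that $\beta_{1}$ commutes with some $\beta_{2}^{\alpha^{-k}}$, hence $\overline{\beta}_{1}$ lies in the cyclic group $Z(\overline{\beta}_{2}^{\alpha^{-k}}) \subset B_{3}'$, and you then claim that the identity $\overline{C}=\overline{\beta}_{1}\overline{\alpha}$, with $\overline{C}$ and $\overline{\alpha}$ of order $3$, ``pins $\overline{\beta}_{1}$ inside a finite-order cyclic subgroup.'' This does not follow: $B_{3}' \cong \mathbb{Z}/2 * \mathbb{Z}/3$ (via $B_{3}=\langle x,y \mid x^{2}=y^{3}\rangle$ with $\Delta^{2}=x^{2}$), and in this group the product of two non-commuting order-$3$ elements typically has infinite order (in $PSL_{2}(\mathbb{Z})$ it is generically hyperbolic). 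So $\overline{\beta}_{1}=\overline{C}\,\overline{\alpha}^{-1}$ may well have infinite order, and its membership in an infinite cyclic centralizer is perfectly consistent with $\beta_{1}$ being non-periodic with $e(\beta_{1})\equiv 0$; no contradiction is reached. This is exactly the hard configuration, and it is where the paper abandons local algebra for a combinatorial mechanism: for the forbidden exponent patterns it first shows every vertex $\s'$ satisfies the hexagon relation $\s' \cdot (c_{1}c_{2})^{3}=\s'$ (the Coxeter element has period $3$ and $\beta_{2},\beta_{3}\in Z((\beta_{2}\beta_{3})^{3})$, so Lemma \ref{lem:calc} applies), uses Lemma \ref{lem:key} only to exclude special triangles (after normalizing a special-triangle vertex to $(\beta_{1},\delta^{\pm 1},\delta^{\pm 1}\beta_{1}^{-1})$ as in Proposition \ref{prop:23-graph}), and then derives the contradiction from the impossibility of assembling a finite graph in which all closed $i$-paths have length $1$ or $3$, no triangle is special, all alternate $6$-paths close up, and the forbidden subgraphs of Lemma \ref{lem:forbidden} are avoided. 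Your argument never invokes the global relation $\s'\cdot(c_{1}c_{2})^{3}=\s'$, and without some such orbit-wide constraint the case you are stuck in cannot be eliminated. Your secondary branch is also incomplete: irreducibility only guarantees that $\beta_{1}$ fails to commute with \emph{at least one} of $\beta_{2},\beta_{3}$, so only one of $\beta_{1}\beta_{2}$, $\beta_{1}\beta_{3}$ is forced periodic by your dichotomy; even granting both, the assertion $\overline{\beta}_{2}=\overline{\beta}_{3}$ does not follow from the two products having order $3$ in $B_{3}'$; and ``rerun the argument to the analogous contradiction'' defers back to the broken step above.

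By contrast, your part 2 is correct and is a legitimate variant of the paper's argument. You unpack $\s'\cdot c_{i}=\s'$ via Lemma \ref{lem:calc}, observe $C^{3}D^{-1}=(\beta_{2}\beta_{3})^{3}$, and reduce by the dichotomy ``$\beta_{2}\beta_{3}$ periodic of period $3$ (so $(\beta_{2}\beta_{3})^{3}$ central) or $\beta_{2},\beta_{3}$ commute (so $(\beta_{2}\beta_{3})^{3}\in Z(\beta_{2})\cap Z(\beta_{3})$)'' --- a valid consequence of finiteness together with Theorem \ref{thm:centerroot} and Proposition \ref{prop:exactbound}. The paper instead notes that $C$ is periodic of period $2$, so conjugation by $C^{3}$ equals conjugation by $C$ and $\s\cdot(c_{1}c_{2})^{3}=(\beta_{1}^{C},\beta_{2}^{C},\beta_{3}^{C})$ is a conjugate system, from which simpleness transfers immediately; your computation is equivalent, slightly more self-contained (it does not need $C$ to be periodic), and equally rigorous.
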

\begin{proof}

Assume that the exponent sum satisfies 
\[ (e(\beta_{1}), e(\beta_{2}),e(\beta_{3})) \equiv (0,\pm 2,\pm 2) , ( \pm 2, 0,\pm 2), ( \pm 2, \pm 2, 0) \;(mod\;6)\] 
Then, the Coxeter element $C$ of $\s$ is periodic with period $3$, and $\beta_{2},\beta_{3} \in Z((\beta_{2}\beta_{3})^{3})$.
So by Lemma \ref{lem:calc}, $\s \cdot (c_{1}c_{2})^{3} = \s$ holds.

First of all, we show that the orbit graphs of such $(3,3)$-periodic systems have no special triangles.
Assume that there exists a special triangle labeled by $2$. Let $\s = (\beta_{1}, \beta_{2}, \beta_{3})$ be a vertex of a special triangle. Then as in the proof of Proposition \ref{prop:23-graph}, we may assume that $\s = (\beta_{1}, \delta^{\pm 1}, \delta^{\pm 1} \beta_{1}^{-1})$ by taking a conjugation of the braid system. 
Let $T$ be a triangle formed by the vertices $\s$, $\s \cdot c_{1}$ and $\s \cdot c_{1}^{2}$. 
Suppose that $T$ is special. Then, $\beta_{1}(\delta^{\pm 1}\beta_{1}^{-1}) $, $\beta_{1}^{(\beta_{1}\delta^{\pm 1})}(\delta^{\pm 1}\beta_{1}^{-1})$ and $\beta_{1}^{(\beta_{1}\delta^{\pm 1})^{2}}(\delta^{\pm 1}\beta_{1}^{-1})$ are periodic, so
by Lemma \ref{lem:key}, $\beta_{1}$ or $\delta^{\pm 1}\beta_{1}^{-1}$ commutes with $\beta_{1}\delta^{\pm 1}$. This implies $\beta$ and $\delta$ commute.
If $T$ is non-special, then $\beta_{1}^{(\beta_{1}\delta^{\pm 1})}$ or $\beta_{1}^{(\beta_{1}\delta^{\pm 1})^{2}}$ commutes with $\delta^{\pm 1}\beta_{1}^{-1}$. Using the fact that $\beta_{1} \delta^{\pm 1}$ is a periodic braid with period $3$, in either case, we obtain that $\beta_{1}$ commutes with $\delta$.
This contradicts the assumption that $\s$ is irreducible. 
The non-existence of special triangles labeled by $1$ is similar.

Then it is impossible to construct an orbit graph $G$ which satisfies all required properties
\begin{enumerate}
\item $G$ satisfies the condition in Lemma \ref{lem:forbidden}. In particular, all closed $i$-paths in $G$ have the length $3$ or $1$ ($i=1,2$).
\item $G$ has no special triangles.
\item $\s \cdot (c_{1}c_{2})^{3} = \s$ holds for all vertex $\s$ in $G$.
\end{enumerate}
So irreducible braid systems having such exponent sums cannot have finite Hurwitz orbit. This proves {\it 1}.

Now, the Coxeter element $C$ of $\s$ is periodic with period $2$ and $e(\beta_{2})+e(\beta_{3}) \equiv \pm 2 \;(mod \;6)$. Thus, $\s \cdot (c_{1}c_{2})^{3} = (\beta_{1}^{C},\beta_{2}^{C},\beta_{3}^{C})$ holds. So $\s$ is a simple vertex if and only if $\s \cdot (c_{1}c_{2})^{3}$ is a simple vertex.

\end{proof}

\begin{prop}
\label{prop:33-graph}
If $\s$ is a $(3,3)$-periodic system having finite Hurwitz orbit, then the orbit graph $G$ is the form $(E)$ in Figure \ref{fig:33graph}. The graph $(E)$ is realized as an orbit graph.
\end{prop}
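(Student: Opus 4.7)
The plan is to mimic the proof of Proposition~\ref{prop:23-graph} almost verbatim, with Lemma~\ref{lem:33res} substituted for Lemma~\ref{lem:23res} at the relevant points.

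First I would unpack Lemma~\ref{lem:33res}{\it 1.}: the exponent sum vector satisfies $(e(\beta_1),e(\beta_2),e(\beta_3))\equiv(\pm1,\pm1,\pm1)\pmod6$, so the two relevant sums $e(\beta_1)+e(\beta_2)$ and $e(\beta_1)+e(\beta_3)$ both lie in $\{0,\pm2\}\pmod6$. In particular neither is $\equiv3\pmod6$, so by the remark following Lemma~\ref{lem:forbidden} no closed $c_i$-path of length $2$ can appear, and every closed $c_1$- and $c_2$-path in the orbit graph $G$ has length $1$ or $3$. Moreover, irreducibility of $\s$ prevents any vertex from being simple with respect to both $c_1$ and $c_2$, since such a vertex would force $\beta_1$ to commute with both $\beta_2$ and $\beta_3$.

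Second, I would rule out special triangles by the same mechanism as in Proposition~\ref{prop:23-graph}. Suppose $(\beta_1,\beta_2,\beta_3)$ lies on a special $c_2$-triangle. The closed-path condition $\s\cdot c_2^3=\s$, together with Proposition~\ref{prop:exactbound} and the explicit formula of Lemma~\ref{lem:calc}, forces the partial Coxeter element $\alpha=\beta_1\beta_3$ to be a periodic $3$-braid of period $3$ modulo the centre. Applying Lemma~\ref{lem:key} with this $\alpha$ then puts either $\beta_1$ or $\beta_2$ into $Z(\alpha)$. Since the centraliser of a non-central $3$-braid is abelian modulo $\langle\Delta^2\rangle$, propagating this commutation with the help of the exponent-sum hypothesis produces a nontrivial partition of $\{1,2,3\}$ into cross-commuting blocks, contradicting the irreducibility of $\s$. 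The case of a special $c_1$-triangle is handled symmetrically.

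Third, with the remaining constraints in hand (closed $i$-paths of length $1$ or $3$ only; no doubly simple vertex; no special triangles; the subgraphs $(F1)$--$(F4)$ of Lemma~\ref{lem:forbidden}{\it 3.} absent; every alternate path of length $12$ closing; and simple vertices paired by the involution $\s\mapsto\s\cdot(c_1c_2)^3$ of Lemma~\ref{lem:33res}{\it 2.}), I would walk around the alternating $12$-cycles and show by straightforward combinatorial bookkeeping that $G$ is forced to coincide with the graph $(E)$ of Figure~\ref{fig:33graph}. To finish, I would exhibit realisability by computing the orbit of a concrete example such as $(\sigma_1,\sigma_2,\sigma_1)$, whose Coxeter element is $\Delta$ and whose $F$-orbit is easily checked, via Lemma~\ref{lem:calc}, to produce the graph $(E)$.

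The main obstacle will be the second step. The special-triangle argument is more delicate here than in the $(2,3)$-periodic setting because $c_1$-triangles and $c_2$-triangles must both be analysed on the same footing; in particular, the reduction to a canonical form in which $\beta_2\equiv\delta^{\pm1}$ used in the proof of Proposition~\ref{prop:23-graph} has to be performed symmetrically for both generators, and one must trace through the consequences of Lemma~\ref{lem:key} while keeping the exponent-sum information $e(\beta_i)\equiv\pm1\pmod6$ in play. Once every special triangle is excluded, the final combinatorial rigidity step that pins down the graph $(E)$ is routine.
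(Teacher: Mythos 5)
Your outline reproduces the paper's proof in shape---exclude special triangles via Lemma~\ref{lem:key}, then let Lemma~\ref{lem:forbidden} together with Lemma~\ref{lem:33res}~\emph{2.} force the graph $(E)$, realized by $(\sigma_{1},\sigma_{2},\sigma_{1})$, which is also the paper's example---but your second step ends in the wrong contradiction, and that is a genuine gap. After applying Lemma~\ref{lem:key} you have two branches. The branch $\beta_{1}\in Z(\beta_{1}\beta_{3})$ is harmless: it says $\beta_{1}$ and $\beta_{3}$ commute, i.e.\ the vertex is simple, contradicting the \emph{specialness} of the triangle (not irreducibility). The problematic branch is $\beta_{2}\in Z(\beta_{1}\beta_{3})$: commuting with the \emph{product} $\beta_{1}\beta_{3}$ gives no commutation of $\beta_{2}$ with $\beta_{1}$ or with $\beta_{3}$ individually, so there is no way to ``propagate this commutation'' into a nontrivial cross-commuting partition of $\{1,2,3\}$. (Indeed, in the $(2,3)$-periodic case of Proposition~\ref{prop:23-graph} exactly this configuration \emph{does} occur, with $\beta_{2}\equiv\delta^{\pm1}$ and $\beta_{1}$ arbitrary, and the system stays irreducible; the outcome there is the triangle graph $(C)$, not a contradiction.)

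The paper's contradiction is different and takes one line: since $\beta_{1}\beta_{3}$ is periodic of period $3$, its centralizer in $B_{3}'=B_{3}/\langle\Delta^{2}\rangle$ is cyclic, generated by a conjugate of $\delta$, so $\beta_{2}\in Z(\beta_{1}\beta_{3})$ forces $\beta_{2}$ itself to be \emph{periodic}. But a periodic $3$-braid is conjugate to a power of $\sigma_{1}\sigma_{2}$ or of $\sigma_{1}\sigma_{2}\sigma_{1}$, hence has exponent sum $\equiv 0,\pm2$ or $3 \pmod 6$, which is incompatible with Lemma~\ref{lem:33res}~\emph{1.}, namely $e(\beta_{2})\equiv\pm1\pmod 6$. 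This periodicity-versus-exponent-sum clash is precisely where the $(3,3)$ case diverges from the $(2,3)$ case, and it is why no special triangle survives here while graph $(C)$ does survive there. Your steps 1 and 3 and the realization are fine; replace the partition/irreducibility endgame by this observation and the argument closes along the paper's lines.
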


\begin{proof}
If there exists a special triangle in the orbit graph, then as in the proof Lemma \ref{lem:33res}, either $\beta_{2}$ or $\beta_{3}$ is periodic. However, we have shown that in $e(\beta_{2}) \equiv e(\beta_{3}) \equiv \pm 1 \;(mod \;6)$ in Lemma \ref{lem:33res}, this is impossible. Thus, the orbit graph has no special triangles.

So by Lemma \ref{lem:forbidden}, the orbit graph must have a subgraph of the form $(E')$ in Figure \ref{fig:33graph}. Non-existence of special triangles implies that either $a$ or $a'$ (resp. $b$ or $b'$) is a simple vertex. 
If $a$ and $b$ are simple, then we obtain a graph $(E)$.
The graph $(E)$ is realized as the orbit graph of the braid system $(\sigma_{1},\sigma_{2},\sigma_{1})$.
The other cases cannot occur, because it violates the condition in Lemma \ref{lem:33res} {\it 2.} 
\end{proof}

\begin{figure}[htbp]
 \begin{center}
\includegraphics[width=80mm]{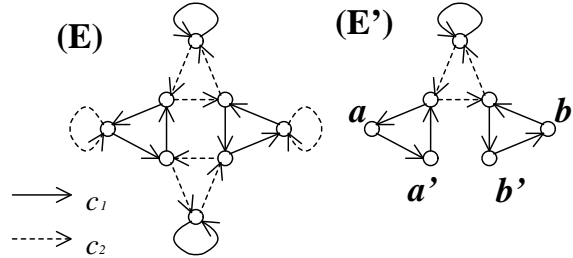}
 \end{center}
 \caption{Orbit graphs of $(3,3)$-periodic systems}
 \label{fig:33graph}
\end{figure}

Now we have classified all orbit graphs of degree $3$, length $3$ irreducible braid systems. Summarizing, we obtain the following result.

\begin{prop}
\label{prop:33bound}
Let $\s$ be an irreducible braid system of degree $3$, length $3$ which has finite Hurwitz orbit. 
Then $\sharp \,( \s \cdot B_{3}) \leq 162 $.
\end{prop}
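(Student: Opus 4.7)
The plan is to read the bound off of the orbit-graph classification assembled in the preceding three subsections. By the propositions for $(2,2)$-, $(2,3)/(3,2)$-, and $(3,3)$-periodic systems, the $F=F_{2}$-orbit graph of any irreducible length-$3$, degree-$3$ braid system $\s$ with finite Hurwitz orbit is one of the five isomorphism types (A)--(E) shown in Figures \ref{fig:22graph}, \ref{fig:23graph}, \ref{fig:33graph}. My first step would be to count the vertices of each type using the forbidden-subgraph list of Lemma \ref{lem:forbidden}, the alternate-path closure $(c_{1}c_{2})^{r}=1$ for some $r\le m!=6$ from Theorem \ref{thm:partialcox}(3), and the cycle-length refinements of Lemmas \ref{lem:22per}, \ref{lem:23res}, and \ref{lem:33res}. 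Types (A), (B), (E) sit inside a finite $S_{3}$-type Coxeter quotient and so have only a handful of vertices; type (C) is pinned down by the explicit normal form $(\beta_{1},\delta^{\pm1},\delta^{\pm1}\beta_{1}^{-1})$ at any special-triangle vertex. The largest type is (D), for which the ambient group $\langle c_{1},c_{2}\mid c_{1}^{2},c_{2}^{3},(c_{1}c_{2})^{6}\rangle$ is infinite and the finite bound on the orbit must be extracted purely from the combinatorial restrictions.

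I would then lift the $F$-orbit bound to a $B_{3}$-orbit bound using the subgroup tower $F\subseteq P_{3}\subseteq B_{3}$ with $[B_{3}:P_{3}]=3!=6$ and $P_{3}=F_{1}\ltimes F$, $F_{1}\cong\mathbb{Z}$. The standard orbit inequality yields $\sharp(\s\cdot B_{3})\le 6\,\sharp(\s\cdot P_{3})$, so it suffices to control $\sharp(\s\cdot P_{3})$ in terms of $\sharp(\s\cdot F)$. Here I would exploit the fact that the Coxeter element $C=\beta_{1}\beta_{2}\beta_{3}$ is Hurwitz-invariant and that, by Lemma \ref{lem:calc}(3) applied to $\Delta^{2}$, the action of the cyclic factor $F_{1}$ amounts to simultaneous conjugation of coordinates by a power of $C$; combining this with Theorem \ref{thm:centerroot} (which bounds the period of that conjugation action by $m!=6$) shows that the $F_{1}$-factor only enlarges the $F$-orbit by a controlled amount. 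Multiplying the resulting bound $\sharp(\s\cdot P_{3})\le 27$ by the factor $6$ coming from $B_{3}/P_{3}$ then gives $\sharp(\s\cdot B_{3})\le 6\cdot 27=162$.

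The main obstacle will be the precise vertex count for graph (D): because its ambient group is the infinite triangle group $\Delta(2,3,6)$, the finiteness of $\s\cdot F$ is forced only by the forbidden-subgraph list of Lemma \ref{lem:forbidden}, the absence of special triangles (Proposition \ref{prop:23-graph}), and the closure condition of Lemma \ref{lem:23res}, rather than by any straightforward group-theoretic identity. Enumerating exactly the graphs compatible with all of these constraints simultaneously is the combinatorial heart of the proof, and it is there that the sharp constant $27$ arises.
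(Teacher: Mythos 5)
Your architecture coincides with the paper's proof: the classification in the preceding subsections gives $\sharp(\s\cdot F)\le 9$ (each of the graphs $(A)$--$(E)$ has at most nine vertices, which is simply read off the classification already established, not re-derived combinatorially as you propose for graph $(D)$), and the paper then lifts through $P_{3}=\langle a_{2,3}\rangle\ltimes F$ via $\sharp(\s\cdot P_{3})\le\sharp(\s\cdot\langle a_{2,3}\rangle)\cdot 9\le 3\cdot 9=27$, finally multiplying by $[B_{3}:P_{3}]=6$. Your choice of complement --- the central factor $\langle\Delta^{2}\rangle$, acting by simultaneous conjugation by powers of the Hurwitz-invariant Coxeter element $C$ via Lemma \ref{lem:calc} \emph{3.} --- is a legitimate variant, since the center splits off $P_{3}$ as a direct factor.

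The one genuine gap is quantitative: you justify the cyclic factor's contribution by Theorem \ref{thm:centerroot}, which bounds the relevant period only by $6$, and that yields $\sharp(\s\cdot P_{3})\le 6\cdot 9=54$ and a final bound of $324$, not the $\sharp(\s\cdot P_{3})\le 27$ you assert. The needed factor $3$ requires degree-$3$-specific input that the paper has already assembled: for an irreducible system the full Coxeter element must be periodic (by Theorem \ref{thm:partialcox}, a pseudo-Anosov or reducible Coxeter element would force the entries of a degree-$3$ system to commute), and the classification lemmas (Lemmas \ref{lem:22per}, \ref{lem:23res}, \ref{lem:33res}) show its period is $2$ or $3$, so the orbit under your central factor has size at most $3$; equivalently, the paper's bound $\sharp(\s\cdot\langle a_{2,3}\rangle)\le 3$ rests on the length-at-most-$3$ bound for closed $i$-paths (Proposition \ref{prop:exactbound}, Lemma \ref{lem:forbidden} \emph{1.}), not on the generic $m!$ bound. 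Relatedly, your claim that the sharp constant $27$ arises from the combinatorics of graph $(D)$ alone is a misattribution: $27=3\cdot 9$ is the product of the cyclic-factor bound and the maximal vertex count over all five graph types.
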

\begin{proof}
From our list of the orbit graphs, $\sharp\,(\s \cdot F) \leq 9$ holds for all irreducible braid system of degree $3$, length $3$ having finite Hurwitz orbit.
Since $P_{3} = F_{1} \ltimes F_{2} = \langle a_{2,3}\rangle \ltimes F$, $\sharp \,(\s \cdot P_{3}) \leq \sharp( \s \cdot \langle a_{2,3} \rangle) \cdot 9$ holds. Now $\sharp\,(\s \cdot \langle a_{2,3} \rangle )\leq 3$, so we conclude that $\sharp\,(\s \cdot B_{3}) \leq [B_{3}:P_{3}] \cdot 3 \cdot 9 = 162$.
\end{proof}

\subsection{Completion of proof}
Now we complete the proof of Theorem \ref{thm:3finite}.
The last step is to study length $4$ braid systems.

\begin{prop}
\label{prop:4bound}
Let $\s$ be a degree $3$, length $4$ irreducible braid system having finite Hurwitz orbit. Then $\sharp\,(\s \cdot B_{4} )\leq 648$.
\end{prop}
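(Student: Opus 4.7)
The target bound $648 = 24 \cdot 27$ agrees with the pattern $27 \cdot n!$ of Theorem \ref{thm:3finite} at $n = 4$. Since $[B_4 : P_4] = 4! = 24$, it suffices to prove $\sharp(\s \cdot P_4) \leq 27$, which then yields $\sharp(\s \cdot B_4) \leq 24 \cdot 27 = 648$.

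To bound $\sharp(\s \cdot P_4)$ I would exploit the split exact sequence $1 \to F_3 \to P_4 \to P_3 \to 1$ of Section 3. Here $F_3 = \langle c_1, c_2, c_3\rangle$ is the free-group kernel of the strand-forgetting map, and the splitting realises $P_3$ inside $P_4$ as pure braids on strands $2, 3, 4$; in the Hurwitz action this copy of $P_3$ fixes the first entry $\beta_1$ and acts on the subsystem $(\beta_2, \beta_3, \beta_4)$ as the standard length-three Hurwitz $P_3$-action. Since the subsystem inherits a finite Hurwitz orbit from $\s$, Proposition \ref{prop:33bound} handles the irreducible case and the inequality from the introduction (which gives $\binom{3}{1} \cdot 1 \cdot 6 = 18$) handles the reducible case, so in either situation $\sharp(\s \cdot P_3) \leq 27$.

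The main obstacle is to verify that the free-group kernel $F_3$ does not enlarge this $P_3$-orbit. By Lemma \ref{lem:calc}(1), each generator $c_i$ acts by conjugating the first $i+1$ entries by powers of the length-two partial Coxeter $\beta_1\beta_{i+1}$; this action is typically non-trivial, and insisting that it be trivial for every $i$ would force $\beta_1$ to commute with $\beta_2, \beta_3, \beta_4$ and hence contradict the irreducibility of $\s$. The plan is instead to show that every $\s \cdot c_i^k$ already lies in $\s \cdot P_3$, by a case analysis on the Nielsen-Thurston type of $\beta_1\beta_{i+1}$. The key inputs are Proposition \ref{prop:exactbound}(1), which in the reducible and pseudo-Anosov cases places the required conjugating power inside $Z(\beta_1\beta_{i+1})$; Theorem \ref{thm:partialcox}, which (exactly as in Section 5.4) forces the full Coxeter $C(\s)$ to be periodic for an irreducible length-four system with finite Hurwitz orbit, giving $\s \cdot (c_1c_2c_3)^r = \s$ for some $r \leq 6$; and Lemma \ref{lem:restriction}, which narrows the admissible exponent-sum pattern modulo $6$ to a very short list. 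Combining these ingredients produces the required absorption $\s \cdot P_4 = \s \cdot P_3$, yielding $\sharp(\s \cdot P_4) \leq 27$ and completing the proof. The delicate bookkeeping of the last step—checking absorption case by case against the orbit graphs of Section 5.4—is where I expect the main difficulty to lie.
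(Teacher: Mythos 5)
Your proposal fails at its central step: the claimed absorption $\s \cdot P_{4} = \s \cdot P_{3}$ is false, and in fact cannot hold for any irreducible system. The splitting of $1 \to F_{3} \to P_{4} \to P_{3} \to 1$ realizes $P_{3}$ as pure braids on strands $2,3,4$, so every system in $\s \cdot P_{3}$ has first entry exactly $\beta_{1}$. But by Lemma \ref{lem:calc} (1), the first entry of $\s \cdot c_{i}$ is $\beta_{1}^{\beta_{1}\beta_{i+1}}$, which equals $\beta_{1}$ only when $\beta_{1}$ commutes with $\beta_{i+1}$; demanding $\s \cdot c_{i} \in \s \cdot P_{3}$ for all $i$ therefore forces $\beta_{1}$ to commute with $\beta_{2},\beta_{3},\beta_{4}$, contradicting irreducibility --- an obstruction you yourself notice in passing, yet your conclusion requires precisely this set equality. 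Concretely, for the extremal system $\s = (\sigma_{1},\sigma_{1},\sigma_{1},\sigma_{2})$, whose $P_{4}$-orbit has exactly $27$ elements, $\s \cdot c_{3}$ has first entry $\sigma_{1}^{\sigma_{1}\sigma_{2}} = \sigma_{2}^{-1}\sigma_{1}\sigma_{2} \neq \sigma_{1}$, so $\s \cdot c_{3} \notin \s \cdot P_{3}$ and $\sharp(\s \cdot P_{3}) < 27 = \sharp(\s \cdot P_{4})$: the $F_{3}$-part strictly enlarges the orbit. Since $F_{3}$ is normal, the decomposition only yields $\sharp(\s \cdot P_{4}) \leq \sharp(\s \cdot P_{3}) \cdot \max_{x}\sharp(x \cdot F_{3})$, and nothing in your outline bounds the $F_{3}$-orbits; naive multiplication also overshoots $27$. (A minor misattribution besides: Proposition \ref{prop:exactbound} (1) concerns reducible $3$-braids only and says nothing in the pseudo-Anosov case, and the bound $\sharp(\s \cdot P_{3}) \leq 27$ concerns a length-three subsystem, so its numerical coincidence with the target constant $27$ carries no logical weight.)

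The idea you are missing is a reduction to a single model system rather than a kernel-by-quotient count. The paper first uses the exponent-sum restriction (Lemma \ref{lem:restriction}) and Lemma \ref{lem:key} to arrange that $\beta_{1},\beta_{2},\beta_{3}$ pairwise commute, then passes to $B_{3}' = B_{3}/\langle \Delta^{2} \rangle$ and writes $\beta_{1} \equiv \beta^{p}$, $\beta_{2} \equiv \beta^{q}$, $\beta_{3} \equiv \beta^{r}$, $\beta_{4} \equiv \gamma$. Periodicity of the partial Coxeter elements $\beta_{2}\gamma$, $\beta_{3}\gamma$, $\beta_{2}\beta_{3}\gamma$, with periods $3,3,2$ determined by exponent sums, gives $(\beta^{q}\gamma)^{3} \equiv (\beta^{r}\gamma)^{3} \equiv (\beta^{q+r}\gamma)^{2} \equiv 1$, from which one deduces $\beta_{1} \equiv \beta_{2} \equiv \beta_{3}$ in $B_{3}'$. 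The assignment $[\sigma_{1}] \mapsto \beta^{p}$, $[\sigma_{2}] \mapsto \gamma$ then defines a surjective homomorphism onto $\langle \beta^{p},\gamma \rangle$, which induces a surjection of Hurwitz orbits from $(\sigma_{1},\sigma_{1},\sigma_{1},\sigma_{2}) \cdot P_{4}$ onto $\s \cdot P_{4}$; a direct computation of the model orbit gives $27$, whence $\sharp(\s \cdot B_{4}) \leq 4! \cdot 27 = 648$. So the constant $27$ is the size of a $P_{4}$-orbit of the length-four model system $(\sigma_{1},\sigma_{1},\sigma_{1},\sigma_{2})$, not of any length-three orbit, and no absorption statement of the kind you propose is available or needed.
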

\begin{proof}

From Lemma \ref{lem:restriction}, the possibility of the exponent sum modulo $6$ for irreducible length $4$ braid systems having finite Hurwitz orbit is $(\varepsilon, \varepsilon, \varepsilon, \varepsilon), \varepsilon = \pm 1$. 
Let $\s =(\beta_{1},\beta_{2},\beta_{3},\beta_{4})$. We may assume that $\beta_{3}$ and $\beta_{4}$ do not commute hence $\beta_{3}\beta_{4}$ is periodic. Moreover, since $(\beta_{1},\beta_{2},\beta_{3})$ is a $(3,3)$-periodic system, so by the orbit graph $(E)$ in Figure \ref{fig:33graph}, we may also assume that $\beta_{1}$ and $\beta_{2}$ commute. In particular, $\beta_{1}\beta_{2}$ is non-periodic, and $\beta_{1}\beta_{2}$ does not commute with $\beta_{3}\beta_{4}$.
Assume that the all partial Coxeter elements $C_{\{1,2,3\}}$ of $\s$, $\s \cdot \sigma_{3}^{2}$ and $\s \cdot \sigma_{3}^{4}$ are periodic. That is, $\beta_{1}\beta_{2}\beta_{3}$, $\beta_{1}\beta_{2}\beta_{3}^{(\beta_{3}\beta_{4})}$ and $\beta_{1}\beta_{2}\beta_{3}^{(\beta_{3}\beta_{4})^{2}}$ are periodic.
Then, by Lemma \ref{lem:key}, either $\beta_{3}$ or $\beta_{1}\beta_{2}$ commute with $\beta_{3}\beta_{4}$, which is a contradiction. Hence we may assume that $\beta_{1}\beta_{2}\beta_{3}$ is non-periodic, so $\beta_{1},\beta_{2}$ and $\beta_{3}$ commute.

We consider in $B_{3}'$. Let us put $\beta_{1} \equiv \beta^{p}$, $\beta_{2} \equiv \beta^{q}$, $\beta_{3} \equiv \beta^{r}$ and $\beta_{4} \equiv \gamma$.
Since $\s$ is irreducible, $\gamma$ does not commute with $\beta$. Therefore, all of $\beta_{2}\gamma$, $\beta_{3}\gamma$, $\beta_{2}\beta_{3}\gamma$ are periodic. Then the exponent sum argument shows that their periods are $3,3,2$ respectively. Thus we have an equality $(\beta^{q}\gamma)^{3} \equiv (\beta^{r}\gamma)^{3} \equiv (\beta^{q+r}\gamma)^{2} \equiv 1$.
From this equality, we obtain $\beta^{r}\gamma\beta^{r} \equiv \gamma\beta^{q}\gamma$.
Similar argument for $\beta_{1}$ and $\beta_{3}$ provide an equality $\beta^{r}\gamma\beta^{r} \equiv \gamma\beta^{p}\gamma$, hence we conclude $\beta^{p} \equiv \beta^{q}$. Similarly, by considering $\beta_{1}$ and $\beta_{2}$, we obtain $\beta^{q} \equiv \beta^{r}$. Hence the equality $\beta_{1} \equiv \beta_{2} \equiv \beta_{3}$ holds.

Let $G$ be a subgroup of $B'_{3}$ generated by $\beta^{p}$ and $\gamma$. Then the map  
$\tau: B'_{3}  \rightarrow G$ defined by $\tau([\sigma_{1}])=\beta^{p}$ and $\tau([\sigma_{2}])=\gamma$ is a surjective homomorphism. Now the map $\tau$ induces a surjection between Hurwitz orbits $(\sigma_{1},\sigma_{1},\sigma_{1},\sigma_{2}) \cdot P_{4} $ and $\s \cdot P_{4} $.
Thus, we conclude that $\sharp(\s \cdot B_{4}) \leq 4! \,\sharp (\sigma_{1},\sigma_{1},\sigma_{1},\sigma_{2}) \cdot P_{4}$.
A direct calculation shows $\sharp\, (\sigma_{1},\sigma_{1},\sigma_{1},\sigma_{2}) \cdot P_{4} = 27$, hence we conclude $\sharp\,( \s \cdot B_{4}) \leq 648$.
\end{proof}
 
\begin{rem}
We remark that the upper bound $648$ is achieved by the braid system $(\sigma_{1},\Delta^{2}\sigma_{1}, \Delta^{4} \sigma_{1}, \Delta^{6}\sigma_{2})$.
The above proof implies that the orbit graph of an irreducible braid system of degree $3$, length $4$ with respect to the Hurwitz $P_{4}$-action is obtained as a quotient of the orbit graph of $(\sigma_{1},\sigma_{1},\sigma_{1},\sigma_{2})$. Since the possibilities of such graphs are finite, we can classify the whole patterns of the orbit graphs for $P_{4}$-action. This implies, theoretically we can list all the possibilities of the orbit graphs of finite Hurwitz orbits.  
\end{rem} 
 
\begin{proof}[Proof of theorem \ref{thm:3finite} {\it 2.}]
The assertion {\it 1.} and {\it 3.} are already proved. 
Since we have already studied the irreducible case, we only need to consider the reducible case.
Let $\s=(\beta_{1},\ldots,\beta_{n})$ be a reducible system having finite Hurwitz orbit and $I \coprod J = \{1,2,\ldots n\}$ be the partition appeared in the definition of a reducible system. 

Assume that $\beta_{i}$ and $\beta_{i'}$ do not commute. Then we may assume that $i,i' \in I$. Now for $j \in J$, $\beta_{j}$ commutes with both $\beta_{i}$ and $\beta_{i'}$. Now $\beta_{i}$ and $\beta_{i'}$ does not commute implies that $Z(\beta_{i}) \cap Z(\beta_{i'}) = Z(B_{3})$, so $\beta_{j} \in Z(B_{3})$ for all $j \in J$.
Thus, we have one of 
\begin{enumerate}
\item All the $\beta_{i}$ commute with each other.
\item There exist $i_{1}< i_{2} < \cdots <  i_{k}$ $(2 \leq k \leq 4)$ such that the braid system $(\beta_{i_{1}},\beta_{i_{2}},\ldots,\beta_{i_{k}})$ is irreducible braid system having finite Hurwitz orbit, and $\beta_{j} \in Z(B_{3})$ for $j \not \in \{ i_{1},i_{2},\ldots,i_{k}\}$.
\end{enumerate}

For the first case, we get $\sharp(\s \cdot B_{n}) \leq n!$.
In the second case, we use the inequality of the size of finite Hurwitz orbit for reducible systems we mentioned at Section 1. By Proposition \ref{prop:33bound} and \ref{prop:4bound}, we get $\sharp(\s \cdot B_{n}) \leq 27 \cdot n!$.
\end{proof}

\noindent
\textbf{Acknowledgment.}
The author would like to express his gratitude to Toshitake Kohno for many helpful suggestions. He also wish to thank Yoshiro Yaguchi for drawing the author's attention to the Hurwitz action. This research was supported by JSPS Research Fellowships for Young Scientists.

\vspace{1cm}
\begin{flushright}
\begin{minipage}{.5\textwidth}
\noindent
Tetsuya Ito \\
University of Tokyo\\
3-8-1 Komaba Meguro-Ku, Tokyo \\
Japan\\
e-mail: tetitoh@ms.u-tokyo.ac.jp
\end{minipage}
\end{flushright}

\end{document}